\newtheorem{thm}{Theorem}[section]
\newtheorem{cor}[thm]{Corollary}
\newtheorem{lem}[thm]{Lemma}
\newtheorem{prop}[thm]{Proposition}
\theoremstyle{definition}
\newtheorem{defn}[thm]{Definition}
\newtheorem{rem}[thm]{Remark}
\numberwithin{equation}{section}
\newcommand{\QQ}{\mathbb Q}
\newcommand{\CC}{\mathbb C}
\newcommand{\PP}{\mathbb P}
\newcommand{\FF}{\mathbb F}
\newcommand{\lra}{\longrightarrow}
\newcommand{\ra}{\rightarrow}
\newcommand{\cA}{\mathcal{A}}
\newcommand{\cD}{\mathcal{D}}
\newcommand{\cJ}{\mathcal{J}}
\newcommand{\cR}{\mathcal{R}}
\newcommand{\cC}{\mathcal{C}}
\newcommand{\cM}{\mathcal{M}}
\newcommand{\cO}{\mathcal{O}}
\newcommand{\cS}{\mathcal{S}}
\newcommand{\cZ}{\mathcal{Z}}
\newcommand{\cX}{\mathcal{X}}
\newcommand{\cY}{\mathcal{Y}}
 \DeclareMathOperator{\Ker}{Ker}
\DeclareMathOperator{\Pic}{Pic}
 \DeclareMathOperator{\Nm}{{Nm}}
  \DeclareMathOperator{\Prym}{{Prym}}
 \DeclareMathOperator{\Stab}{Stab}
\DeclareMathOperator{\SL}{{SL}}
 \DeclareMathOperator{\Proj}{Proj}
\DeclareMathOperator{\Orb}{Orb}
\DeclareMathOperator{\supp}{{supp}}
\begin{document}

\title[ ]{ Pryms of non-cyclic triple coverings and log canonical models of the spin moduli space of genus 2}
\author{ Herbert Lange and  Angela Ortega}

\address{H. Lange \\ Department Mathematik der Universit\"at Erlangen \\ Germany}
\email{lange@mi.uni-erlangen.de}
              
\address{A. Ortega \\ Institut f\" ur Mathematik, Humboldt Universit\"at zu Berlin \\ Germany}
\email{ortega@math.hu-berlin.de}

\thanks{The second author was supported by Deutsche Forschungsgemeinschaft, SFB 647}
\subjclass{14H10, 14H30, 14H40}
\keywords{Prym variety, Prym map, spin curve}

\begin{abstract} 
We show that the moduli spaces of non-cyclic triple covers of genus 2 curves and that of even spin curves of genus 2
are birationally isomorphic via the Prym map.  We describe the log canonical models of the moduli space of $S^+_2$  and 
use this to extend the above birational map. 
\end{abstract}

\maketitle

\section{Introduction}

In recent years,  the log minimal  model program has been carried out completely for the  moduli space 
$\overline{M}_g$ in low genera and in many cases these models have also a modular meaning. For instance, in
\cite{h} the Deligne-Mumford compactification and the GIT compactification of $M_2$ are realized as log minimal
models of $\overline{M}_2$. In this paper we study the log canonical models of the moduli space $\overline{S}^+_2$ of even spin
 curves of genus 2. Our initial motivation was  to find out if the birational morphism, induced by the Prym map,  between the 
moduli space $R_{2,3}^{nc}$  of non-cyclic \'etale triple coverings of genus 2 curves and $S^+_2$  (\cite{lo1}, \cite{lo2})
could be related to a new modular compactification of the spin moduli space.
In order to state our theorems let us recall some previous results. 
 
Let $f: Y \ra X$ be a non-cyclic \'etale 3-fold covering of a smooth projective curve $X$ of genus 2. 
Its Prym variety $P(f) := (\Ker \Nm f)^0$ is a Jacobian surface with principal polarization $\Xi$, giving rise to a 
map $Pr: R_{2,3}^{nc} \ra \cJ_2$  into the moduli space of 
canonically polarized Jacobian surfaces. In \cite{lo1} we showed that this map is finite of degree 
10 onto its image which is an open set in $\cJ_2$. We also proved that the Prym map is not surjective and 
determined its image explicitly. 

 In \cite{lo2} we extended this map to a proper map which is surjective onto the moduli space 
$\cA_2$  of principally polarized abelian surfaces.  To be more precise, we denote by 
$\mathfrak{S}_3 := \langle \sigma, \tau \;| \; \sigma^3 = \tau^2 = (\sigma \tau)^2 = 1 \rangle$ 
the symmetric group of order 6 and let 
${}_{\mathfrak{S}_3}{\overline M}_2$ denote the moduli space of admissible $\cS_3$-covers
of stable curves of genus 2, as defined in \cite{acv}. We consider the following open subspace 
$$
{}_{\mathfrak{S}_3}{\widetilde M}_2 := \left\{ [h: Z \ra X] \in {}_{\mathfrak{S}_3}{\overline 
M}_2 \;\left| \begin{array}{l}   p_a(Z) =7   \;  \mbox{and for any node}\;  z \in Z, \\
                                                                     \mbox{the                                                    
                                                                     stabilizer}                                  
                                                                     \Stab (z) \; \mbox{is of order 3}
                                                                     \end{array} \right. \right\}.
$$
For all $[h: Z \ra X] \in {}_{\mathfrak{S}_3}{\widetilde M}_2$ the Prym variety of 
$f: Y := Z/\langle \tau \rangle \ra X$ is a principally polarized abelian surface, independent of the 
involution $\tau \in \mathfrak S_3$. This defines a Prym map 
$Pr:  {}_{\mathfrak{S}_3}{\widetilde M}_2 \ra \cA_2$ which is proper, surjective and finite of 
degree 10.

The description of the fibres of the Prym map suggests that there is a close relation between the space
${}_{\mathfrak{S}_3}{\widetilde M}_2$ and the moduli space $S_2^+$ of even spin curves of genus 
2. We define the following open subspace
$$
{}_{\mathfrak{S}_3}{\widehat M}_2 := \left\{ [h: Z \ra X] \in {}_{\mathfrak{S}_3}{\widetilde 
M}_2 \;\left| \begin{array}{l} $X$ \; \mbox{irreducible}
                                                                     \end{array} \right. \right\}.
$$
Then our first result is\\

\noindent
{\bf Theorem 1.}
{\it There is a canonical isomorphism $\alpha: {}_{\mathfrak{S}_3}{\widehat M}_2 \ra S_2^+$.}\\

This is true even on the level of moduli stacks (see Theorem \ref{thm3.1}). We then consider the 
question whether one can extend this isomorphism to a regular map defined on the whole of 
${}_{\mathfrak{S}_3}{\widetilde M}_2$ with values in some compactification of $S_2^+$.
For this we work out the log canonical models of the compactification $\overline{S}_2^+$ of $S_2^+$, as defined by Cornalba in \cite{c}.  We denote by 
$\overline{\cS}_2^+$ (respectively $\cS_2^+$) the moduli stack corresponding to
$\overline{S}_2^+$ (respectively $S_2^+$) and by $\delta := \overline{\cS}_2^+ \setminus \cS_2^+$ the boundary divisor. 
The {\it log canonical models }  of  $\overline{\cS}_2^+$ with respect to $K_{\overline{\cS}_2^+} + 
\epsilon \delta$  are defined by
$$ 
\overline{\cS}_2^+(\epsilon):= \Proj \left( \bigoplus_{n\geq 0 } \Gamma (\overline{\cS}_2^+,  n(K_{\overline{\cS}_2^+} + 
\epsilon \delta )) \right), 
$$
for $\epsilon \in \QQ \cap [0,1]$.  The fact that sections of invertible sheaves on $\overline{\cS}_2^+$ are
pullbacks of sections of the corresponding sheaves on $\overline{S}_2^+$ implies that the log canonical model of
$\overline{\cS}_2^+$  with respect to $K_{\overline{\cS}_2^+} + \epsilon \delta$ can be identified with the 
log canonical model $\overline{S}_2^+(\epsilon)$ of $\overline{S}_2^+$ with respect to the corresponding divisor in 
$\overline{S}_2^+$ (see Corollary \ref{ramform}).

The following theorem (see the end of Sections 6 and 9) is analogous to the corresponding result of Hassett's for
$\overline{M}_2$ (see \cite[Theorem 4.10]{h}).
\\
\\
\noindent
{\bf Theorem 2.}
{\it Consider the log canonical model of $\overline{\cS}_2^+$ with respect to $K_{\overline 
{\cS}_2^+} + \epsilon \delta$, that is, the log canonical model of $\overline{S}_2^+$ with 
respect to the corresponding divisor in $\overline{S}_2^+$.
\begin{enumerate}
\item For $\epsilon > \frac{57}{25}$ we have $ \overline{S}_2^+(\epsilon) \simeq \overline{S}_2^+$.

\item For $\frac{49}{25} < \epsilon \leq \frac{57}{25}$ we have $ \overline{S}_2^+(\epsilon) \simeq \overline{S_2^+}^{inv}$,
where $\overline{S_2^+}^{inv}$ denotes the invariant theoretical compactification  (for the definition see Section 7).

\item For $\epsilon = \frac{49}{25}$ we get a point; the log canonical divisor fails to be effective for 
$\epsilon < \frac{49}{25}$.
\end{enumerate}}

The invariant theoretical compactification of $S_2^+$ is the $\Proj$ of certain ring of invariants arising from 
the parametrization of binary sextic forms together with a partition of the roots into two sets of three elements.  
Concerning the extension of the map $\alpha$ to the whole of ${}_{\mathfrak{S}_3}{\widetilde M}_2$ we finally obtain 
(see Propositions \ref{prop4.1} and \ref{prop10.1}):\\

\noindent
{\bf Theorem 3.} {\it
\begin{enumerate}
\item The construction defining the isomorphism $\alpha: {}_{\mathfrak{S}_3}{\widehat M}_2 \ra S_2^+$ does not extend to ${}_{\mathfrak{S}_3}{\widetilde M}_2$.
\item The isomorphism $\alpha: {}_{\mathfrak{S}_3}{\widehat M}_2 \ra S_2^+$ extends to a regular map ${}_{\mathfrak{S}_3}{\widetilde M}_2 \ra \overline{S_2^+}^{inv}$.
\end{enumerate}}

\hspace{0.3cm}
The paper is organized as follows. In Section 2 we recall the definitions of the main moduli stacks 
and spaces which are used in the sequel. In Section 2 the proof of Theorem 1 is given. In Section 4
we show Theorem 3 (1). In Section 5 we compute the nef cone of $\overline{S}_2^+$
and apply this in Section 6 to prove Theorem 2 (1). In Section 7 and 8 we study the 
invariant-theoretical compactification $\overline{S_2^+}^{inv}$ and the canonical map
$\overline{S}_2^+ \ra \overline{S_2^+}^{inv}$ and use this in Section 9 to give a proof of 
Theorem 2 (2) and 2 (3). Finally, in Section 10 we prove Theorem 3 (2).\\

\vspace{0.6cm}
We work over an algebrically closed field $k$ of characteristic zero. Moduli spaces are denoted by 
capital bold letters, the corresponding moduli stacks by the corresponding cursive letters. Divisors on 
a coarse moduli space are denoted by capital Latin letters,
divisors on a moduli stack by small Greek letters. We denote a divisor and its class in the rational 
Picard group by the same letter. \\

We thank I. Dolgachev for the hint leading to Remark \ref{rem7.2}. The second author is thankful to 
G. Farkas for helpful and stimulating discussions. 
                                         
\section{The moduli stacks}

\subsection{The stacks of $\mathfrak{S}_3$-coverings of genus-2 curves}

Let ${}_{\mathfrak{S}_3}\cM_2$ denote the moduli stack of \'etale Galois covers of smooth curves of genus 2 with Galois group the symmetric group $\mathfrak{S}_3$ of order 6 (\cite[Theorem 17.2.11]{acg}) and its compactification
${}_{\mathfrak{S}_3}{\overline \cM}_2$ of ${}_{\mathfrak{S}_3}\cM_2$ by admissible $\mathfrak{S}_3$-covers as constructed in 
\cite{acv} (see also \cite[Chapter 17]{acg}).

We consider the following open substack  ${}_{\mathfrak{S}_3}{\widehat \cM}_2$ of ${}_{\mathfrak{S}_3}{\overline \cM}_2$
associated to the functor \newline $F: Sch/\CC \ra  Ens$ defined by
$$
S \mapsto \left\{ [h:\cZ \ra \cX \; \mbox{over}\; S] \in {}_{\mathfrak{S}_3}{\overline \cM}_2 \;\left| \begin{array}{l}
                                                                    \mbox{for all}\;  s \in S, \; p_a(\cZ_s) = 7, \cX_s \; \mbox{is                                  
                                                                         irreducible,}\\ 
                                                                   \mbox{for any node} \; z \in \cZ_s, 
                                                                   \; \Stab (z) \; \mbox{is of order 3}
                                                                     \end{array} \right. \right\}.
$$
Note that 
${}_{\mathfrak{S}_3}{\widehat \cM}_2$ is a smooth Deligne-Mumford stack of dimension 3.
The functor $F$ admits a coarse moduli space denoted by
${}_{\mathfrak{S}_3}{\widehat M}_2$. 

According to  \cite{lo2} the stack admits a stratification
$$
{}_{\mathfrak{S}_3}{\widehat \cM}_2 = {}_{\mathfrak{S}_3}{\cM}_2 \sqcup \cR_2 \sqcup \cR_1.
$$
Here $\cR_2$ (respectively $\cR_1$) denotes the locally closed substack of 
${}_{\mathfrak{S}_3}{\widehat \cM}_2$ where  $\cX_s$ admits exactly one node 
(respectively two nodes)
for all $s$.
The index refers to the dimension of the substack. Similarly, there is a stratification  
${}_{\mathfrak{S}_3}{\widehat M}_2 = {}_{\mathfrak{S}_3}{M}_2 \sqcup R_2 \sqcup R_1$ for the corresponding 
moduli spaces.

\subsection{The stacks of genus-2 spin curves}
Recall that a smooth spin curve of genus 2 is a pair $(C,\kappa)$ with $C$ a smooth curve of genus 2 
and $\kappa$ a theta characteristic on $C$, i.e. a line bundle on $C$ whose square is the canonical 
bundle. This definition extends in the obvious way to families of spin curves.
Let $\cS_2$ denote the moduli stack of smooth spin curves of genus 2.
It is associated to the functor $G: Sch/\CC \ra  Ens$ defined by
$$
S \mapsto \left\{\mbox{pairs} \; (\cC \ra S, \kappa_S) \;\left| \begin{array}{l}
                                                               \cC \ra S \; \mbox{is a smooth curve of genus 2 over} \ S,\\
                                                   \kappa_S \; \mbox{ is a theta characteristic of }\; \cC \; \mbox{over} \;S
                                                                     \end{array} \right. \right\}.
$$

The moduli stack $\cS_2$ is a smooth Deligne-Mumford stack which decomposes into two 
irreducible components $\cS_2^+$ and $\cS_2^-$ depending on the parity of the theta characteristic. Thus
a spin curve $(C, \kappa)$  is in $\cS_2^+$  (respectively  $\cS_2^-$) if and only if $h^0(\kappa)$ is an even (respectively odd)
number.  Moreover, the forgetful map  $\cS_2 \ra \cM_2$  onto the 
moduli stack $\cM_2$ of (smooth) curves of genus 2 is of degree 10 over $\cS_2^+$ (respectively 6 over $\cS_2^-$).
The functor $G$ admits a coarse moduli space denoted by $S_2$. 
Clearly we have $S_2 = S_2^+ \sqcup S_2^-$.\\ 

We recall now the compactification $\overline{\cS}_2$ of  $\cS_2$ constructed by Cornalba  in \cite{c} 
(in fact, the construction works for arbitrary genus, we only need the genus-2 case).
A rational component $E \subset X$ of  a nodal curve $X$ is called {\it exceptional} if 
$\# (E \cap \overline{X \setminus E})=2$. The curve $X$ is called {\it quasi-stable} if $\# (E \cap \overline{X \setminus E})\geq2$
for any smooth rational component $E \subset X$ and any two exceptional components are disjoint.  
 A ({\it generalized}) {\it spin curve} of genus 2 is a triple $(X, \kappa, \beta)$, where $X$ is a quasi-stable curve of (arithmetic) genus 2, $\kappa \in \Pic^{1}(X)$ is 
a line bundle such that $\kappa_E= \cO_E(1)$ for every exceptional component $E\subset X$, and $\beta: \kappa^{\otimes 2} \ra \omega_X$ is
a sheaf homomorphism which is generically non-zero along each non-exceptional component of $X$.

When $X$ is smooth, $\kappa$ is an ordinary theta characteristic and $\beta $ is an isomorphism. 
A {\it family of spin curves } over a base scheme $S$ consists of a triple 
$(\cX \stackrel{f}{\ra} S, \eta, \beta)$ where $f : \cX  \ra S$
is a flat family of quasi-stable curves, $\eta \in \Pic(\cX)$ is a line bundle and 
$\beta: \eta^{\otimes 2} \ra \omega_{\cX}$ is a sheaf 
homomorphism, such that at every point $s \in S$ the restriction $(\cX_s, \eta_s, \beta_s)$ is a spin 
curve of genus 2. The compactification
$\overline{\cS}_2$ is the stack  associated to the functor $G: S \mapsto \{ (\cX \stackrel{f}{\ra} S, \eta, \beta) \} $.  Again one has the obvious decomposition
$$
\overline{\cS}_2 = \overline{\cS}_2^+ \sqcup \overline{\cS}_2^-
$$ 
depending on the parity of $h^0(C, \kappa)$.
 
Let us describe the boundary components of $\overline{S}^+_2$ respectively 
$\overline{\cS}^+_2$.
Denote by $\pi: \overline{S}^+_2 \ra 
\overline{M}_2 $ the forgetful map $[X,  \kappa, \beta] \mapsto [C]$, where $C$ is the stable model  
of $X$ is obtained from $X$ by contracting all the exceptional components. 

If $[X, \kappa, \beta ] \in  \pi^{-1}([X_1\cup_y X_2])$ where $X_1$ and $X_2$ are elliptic 
curves intersecting transversally in a point $y$, then necessarily  
$$
X:=X_1\cup_{y_1} E \cup_{y_2}X_2,
$$ 
where $E$ is an exceptional component such that 
$X_1 \cap E = \{y_1\}$ and $X_2 \cap E = \{y_2\}$ with $\pi(y_i) = y$ for $i=1$ and 2. Moreover, 
$$
\kappa= (\kappa_{X_1}, \kappa_{X_2}, \kappa_E= O_E(1)) \in \Pic^1(X),
$$ 
with theta characteristics $\kappa_{X_i}$ on $X_i$ for $i=1$ and 2. 
The condition $h^0(X, \kappa)\equiv 0$ mod $2$ implies that  $\kappa_{X_1}$ and $\kappa_{X_2}$ have the same parity. We denote 
$$
A_1 := \; \mbox{closure of} \left\{(X,\kappa,\beta) \in \overline{S}^+_2 \;\left|\; 
\begin{array}{c}X \; \mbox{and} \;  \kappa \; \mbox{as above with}
\; h^0(\kappa_{X_i}) = 0 \; \mbox{for} \; i = 1,2\\
\mbox{and} \;\beta = \; \mbox{the obvious map (i.e. zero on}\; E)
\end{array} 
\right. \right\}
$$  
and 
$$
B_1 := \; \mbox{closure of} \left\{(X,\kappa,\beta) \in \overline{S}^+_2 \;\left| \; \begin{array}{c} 
X \; \mbox{and} \;  \kappa \; \mbox{as above with}
\; \kappa_{X_i} = \cO_{X_i} \; \mbox{for} \; i = 1,2\\
\mbox{and} \;\beta = \; \mbox{the obvious map (i.e. zero on}\; E)
\end{array}
\right. \right\}.
$$

If $[X, \kappa, \beta]  \in  \pi^{-1}([C])$ with $C$ an irreducible one-nodal curve, 
$\nu: \widetilde C \ra C$ 
denotes the normalization of $C$, and
$$
C = \widetilde C/y_1 \sim y_2
$$ 
where $y_1$ and $y_2$ map to the node $y$ of $C$, then
there are two possibilities, namely
$$
X=C \quad  \mbox{or} \quad  X= \widetilde C\cup_{\{y_1,y_2\}} E
$$ 
with $E$ an exceptional component.
In the first case let $\kappa_{\widetilde C}:= \nu^*(\kappa)$.
Then $ \kappa_{\widetilde C}^{\otimes 2}= K_{\widetilde C}(y_1+y_2)$ and there is only one choice of gluing the fibres 
$\kappa_{\widetilde C}(y_1)$ and $\kappa_{\widetilde C}(y_2)$ to get $\kappa$ such that $h^0(X, \kappa) \equiv 0 $ mod $2$. We denote  
$$
A_0 := \; \mbox{closure of}   \; \left\{(X,\kappa,\beta) \in  \overline{S}^+_2 \; \left| \;
\begin{array}{c}
X =C,  \; \kappa \; \mbox{as above and} \;  \beta \;\mbox{the obvious map} 
\end{array}
\right. \right\}.
$$

If $[X, \kappa, \beta]  \in  \pi^{-1}([C])$ with $X= \widetilde C\cup_{\{y_1,y_2\}} E$, 
then $\kappa_{\widetilde C}:= \kappa \otimes \cO_{\widetilde C}$ is a 
theta characteristic on $\widetilde C$ and $\kappa|_E = \cO_E(1)$.
Since $H^0(X, \omega_X) \simeq H^0(\widetilde C, \omega_{\widetilde C})$, it follows that $ \kappa_{\widetilde C}$ is an even theta characteristic on the elliptic curve $\widetilde C$. We denote 
$$
B_0 := \; \mbox{closure of} \left\{ (X, \kappa, \beta) \in \overline{S}^+_2  \; \left| 
\begin{array}{c} 
X= \widetilde C\cup_{\{y_1,y_2\}} E, \; \kappa \; \mbox{as above}\\
\mbox{and} \; \beta \; \mbox{the obvious map} 
\end{array}
\right. \right\}.
$$
We denote by $\alpha_0, \beta_0, \alpha_1, \beta_1$ the corresponding divisors of the stack $\overline{\cS}_2^+$.
All in all, we have 
$$
\overline{S}^+_2 = S^+_2 \sqcup A_0 \sqcup B_0 \sqcup A_1 \sqcup B_1 \quad \mbox{and}
\quad 
\overline{\cS}^+_2 = \cS^+_2 \sqcup \alpha_0 \sqcup \beta_0 \sqcup \alpha_1 \sqcup \beta_1.
$$

\section{The isomorphism ${}_{\mathfrak{S}_3}{\widehat \cM}_2 \ra \cS_2^+$}

Let $\cJ_2$ be the open substack of the stack of canonically polarized  
Jacobians of smooth curves of genus 2. In \cite{lo2} we showed that the Prym map 
$$
Pr: {}_{\mathfrak{S}_3}{\widehat M}_2 \ra \cJ_2
$$ 
is a finite surjective morphism of degree 10. The description of the fibres of $Pr$ hints that there is a relation between the spaces 
${}_{\mathfrak{S}_3}{\widehat \cM}_2$ and $\overline{\cS}^+_2$. 
Note that the forgetful map $\cS_2^+ \ra \cM_2$ is also of degree 10.
The idea is, to associate to each admissible non-cyclic  $\mathfrak{S}_3$-covering 
$[Z \ra X] \in {}_{\mathfrak{S}_3}{\widehat M}_2$  the theta divisor of the corresponding Prym variety
$P(f)$ of the induced covering $f: Y = Z/\langle \tau \rangle \ra X$, 
which in this case is a smooth genus 2 curve $C$,
together with the theta characteristic arising from a naturally defined 6:1 map  $C \ra  \PP^1$.

\begin{thm} \label{thm3.1}
There is a canonical isomorphism of stacks
$$
\alpha: {}_{\mathfrak{S}_3}{\widehat \cM}_2 \ra \cS_2^+.
$$
\end{thm}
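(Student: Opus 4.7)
The plan is to construct $\alpha$ directly from the recipe sketched in the paragraph preceding the theorem, verify it lands in $\cS_2^+$, check functoriality in families, and then exhibit a two-sided inverse. First, on the smooth stratum ${}_{\mathfrak{S}_3}\cM_2$: given $[h: Z \ra X]$, form the intermediate quotient $f: Y = Z/\langle \tau \rangle \ra X$; by \cite{lo1} the Prym $P(f)$ is a principally polarized Jacobian surface $J(C)$ for some smooth curve $C$ of genus $2$, recovered geometrically as the theta divisor of $P(f)$. The residual $\mathfrak{S}_3$-action induces a canonical $6:1$ morphism $C \ra \PP^1$ (the one sketched before the statement), and a degree-$1$ line bundle $\kappa$ is extracted from the associated linear series by an explicit normalization. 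That the isomorphism $P(f) \cong J(C)$ is independent of the choice of involution $\tau \in \mathfrak{S}_3$ (from \cite{lo2}) shows $\kappa$ is well defined, and a direct cohomological count checks $h^0(C,\kappa)$ is even, so $(C,\kappa) \in \cS_2^+$.

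Next I would extend the construction over the nodal strata $\cR_2$ (one node on $X$) and $\cR_1$ (two nodes). Using the admissibility hypothesis — every node of $Z$ has stabilizer of order $3$ — the norm map on the relative generalized Jacobian of $Y$ over $X$ is surjective with compact connected kernel component, so $P(f)$ is still an abelian (not merely semi-abelian) surface with principal polarization. The irreducibility of $X$ is essential: it is exactly what prevents $P(f)$ from acquiring a toric part, which is the obstruction ruling out any extension to ${}_{\mathfrak{S}_3}\widetilde{\cM}_2$ (compare Theorem 3 (1)). One then checks the theta divisor remains a smooth genus-$2$ curve $C$ and that the $6:1$ pencil extends, producing a smooth even spin curve.

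All the ingredients (intermediate quotient, relative Picard, norm map, relative theta divisor, $6:1$ pencil) are formed in families over an arbitrary base scheme $S$, so the pointwise assignment upgrades to a morphism of Deligne-Mumford stacks $\alpha: {}_{\mathfrak{S}_3}\widehat{\cM}_2 \ra \cS_2^+$ with the correct behaviour on automorphism groups. To show $\alpha$ is an isomorphism I would build the inverse: for $(C,\kappa) \in \cS_2^+$, the even theta characteristic corresponds bijectively to an unordered partition of the six Weierstrass points of $C$ into two triples; combined with the hyperelliptic pencil $C \ra \PP^1$, such a partition determines a non-cyclic $\mathfrak{S}_3$-cover $Z \ra X$ of a suitable genus-$2$ curve $X$, obtained as an appropriate quotient of the iterated fibre product $C \times_{\PP^1} C \times_{\PP^1} C$. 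Since both stacks are smooth Deligne-Mumford stacks of dimension $3$, it suffices to verify that $\alpha$ is bijective on geometric points with matching automorphism groups on each stratum; the bijection on geometric points follows from the fibre description of the Prym map in \cite{lo2}, whose cardinality $10$ matches the degree of $\cS_2^+ \ra \cM_2$.

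The main obstacle I expect is the nodal extension in Step 2: one must show that $P(f)$ remains a smooth Jacobian and that the $6:1$ pencil does not drop degree or acquire a singular source as $X$ degenerates. This will require a careful local analysis at the admissible nodes of $Z$, tightly exploiting both the order-$3$ stabilizer hypothesis and the irreducibility of $X$ in order to see that the local contribution to $P(f)$ comes entirely from the $\mathfrak{S}_3$-representation theory of the fibre, ruling out toric factors and keeping $C$ smooth and irreducible.
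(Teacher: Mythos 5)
Your overall strategy (realize $C$ as the theta divisor of the Prym of $f\colon Y=Z/\langle\tau\rangle\ra X$ with the theta characteristic cut out by the induced $6:1$ pencil, and invert via the partition of the Weierstrass points determined by an even theta characteristic) is the paper's strategy, but two of your key steps have genuine gaps. First, on the nodal strata your argument that $P(f)$ is a Jacobian because irreducibility of $X$ ``rules out a toric part'' misidentifies the issue: by \cite{lo2} the Prym of \emph{any} cover in ${}_{\mathfrak{S}_3}\widetilde{M}_2$, including those with reducible $X$, is already a principally polarized abelian surface; what fails for reducible $X$ is not compactness but indecomposability ($P=P_1\times P_2$ is a product of elliptic curves, cf.\ Section 4), and conversely for irreducible nodal $X$ the fact that one gets an \emph{indecomposable} ppas still has to be proved. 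The paper does this not by abstract Prym theory but by writing $C(s)$ down explicitly inside $\Pic^2$ of the normalization $\widetilde{\cY(s)}$ and identifying its six Weierstrass points as the sums $[q_i+q_j]$ of the six Weierstrass points of $\widetilde{\cY(s)}$ lying over the two special branch points of $f$; this explicit description is also what makes $\kappa=\cO_{C}(\omega_1+\omega_2-\omega_3)$ canonical. Your ``extracted from the associated linear series by an explicit normalization'' leaves the actual definition of $\kappa$, and the check that it is even and choice-independent, unaddressed.

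Second, and more seriously, your inverse is underdetermined. From $(C,\kappa)$ the partition of Weierstrass points gives a degree-$6$ map $\psi\colon C\ra\PP^1$ factoring as the hyperelliptic map followed by a degree-$3$ map $\bar f\colon\PP^1\ra\PP^1$; the whole content of the inverse construction is the Hurwitz analysis of $\bar f$ (total ramification degree $4$, hence either four simple ramification points, two simple and one double, or two double), which dictates whether the hyperelliptic curves $\cY$ and $\cX$ built over the two $\PP^1$'s are smooth, $1$-nodal or $2$-nodal. A smooth spin curve can perfectly well land in the nodal strata $\cR_2\sqcup\cR_1$, so any inverse must produce nodal admissible covers from smooth spin curves; your quotient of $C\times_{\PP^1}C\times_{\PP^1}C$ neither sees this case division nor obviously yields a genus-$2$ curve $X$ carrying an admissible $\mathfrak{S}_3$-cover. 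Relatedly, the fallback ``bijective on geometric points since both maps to $\cM_2$ have degree $10$'' does not by itself identify the two degree-$10$ covers via $\alpha$; the paper avoids this by exhibiting $\beta$ explicitly (construct $\bar f$, lift it to an admissible cover $f\colon\cY\ra\cX$, take the Galois closure) and verifying directly that $\beta$ is a two-sided inverse of $\alpha$.
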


\begin{proof}
Let $[h:\cZ \ra \cX \; \mbox{over}\; S] \in {}_{\mathfrak{S}_3}{\widehat \cM}_2$. So the group $\mathfrak{S}_3 = \langle \sigma, \tau \rangle$ acts on $\cZ$ over $S$. The quotient $\cY := \cZ/\langle \tau \rangle$ is an admissible cover of degree 3 of $\cX$ over $S$. Let 
$$
f: \cY \ra \cX
$$ 
denote the induced morphism. For any closed point $s \in S$ we associated in \cite{lo1} and \cite{lo2} a smooth curve $C(s)$ and a theta
characteristic $\kappa(s)$ on $C(s)$ in the following way: Let $\widetilde{f(s)}: \widetilde{\cY(s)} \ra \widetilde{\cX(s)}$ denote the normalization of the map $f(s): \cY(s) \ra \cX(s)$. The curve 
$\widetilde{\cY(s)} $ is hyperelliptic of genus 4 (respectively 3 or 2 if $\cX(s)$ has 1 or 2 nodes) and there is a commutative diagram (see \cite[proof of Theorem 5.1]{lo1} in the smooth case and 
 \cite[proof of Theorem 9.1]{lo2} in the nodal case)
\begin{equation} \label{commdiag}
\xymatrix{
\widetilde{\cY(s)} \ar[r]  \ar[d]_{\widetilde{f(s)}} & \PP^1 \ar[d]_{\bar{f}(s)} & C(s) \ar[l]_{\varphi(s)} \ar[ld]^{\psi(s)} \\
\widetilde{\cX(s)}  \ar[r] & \PP^1&
}
\end{equation}
where the horizontal maps of the square are the hyperelliptic coverings (respectively a suitable double cover in the nodal case) 
and  $C(s)$ is a smooth curve of genus 2,  which is a theta divisor of the principal polarization of the Prym variety of the covering
$f(s)$,  with hyperelliptic cover $\varphi(s)$. To be more precise, in any case ($\cX(s)$ smooth or not) there are 6 Weierstrass 
points $q_1, \cdots, q_6$ of $\widetilde{\cY(s)}$ and 2 ramification points $p_1$ and $p_2$ of the double cover $\widetilde{\cX(s)} 
\ra \PP^1$ such that 
$$
f(s)(q_1) = f(s)(q_2) = f(s)(q_3) = p_1 \quad \mbox{and} \quad f(s)(q_4) = f(s)(q_5) = f(s)(q_6) = p_2.
$$
We can then consider the Prym variety of $f(s)$ as a subvariety of $\Pic^2(\widetilde{\cY(s)})$ and the symmetric product $\widetilde{\cY(s)}^{(2)}$ as an open subset of $\Pic^2(\widetilde{\cY(s)})$ in the usual way. 
In the smooth case the curve $C(s)$ is given as (see \cite[Section 4.5]{lo1})
$$
C(s) = \{ \cO_Y(y+z) \in \Pic^2(Y) \; | \; f(y) = f(\iota_Yz), z \neq y \} 
$$
with the reduced subscheme structure, where we write $f$ for $f(s)$ and $\iota_Y$ denotes the hyperelliptic involution of $Y$.
In the nodal case there is a slight modification of this definition 
for which we refer to \cite[Section 8]{lo2}. With this notation the curve $C(s)$ in any case irreducible and 
smooth of genus 2. Its Weierstrass points are the 6 points of $C(s)$: 
$$
\omega_1 = [q_1 + q_2],
\omega_2= [q_1 + q_3], \omega_3=  [q_2+q_3], \omega_4=  [q_4+q_5], \omega_5= [q_4+q_6], \omega_6 =[q_5+q_6]
$$ 
(see \cite[Proposition 4.18]{lo1} and \cite[Remark 8.9]{lo2}).
The composed map $\psi(s)$ is the morphism given by the pencil generated by the 2 divisors $2\omega_1 + 2 \omega_2 + 2\omega_3$ and  
$2\omega_4 + 2 \omega_5 + 2\omega_6$. Clearly, the linearly equivalent divisors 
$\omega_1 +\omega_2 - \omega_3$ and $\omega_4 + \omega_5 - \omega_6$ of $C(s)$ define an even theta characteristic $\kappa(s)$ on $C(s)$ which is uniquely and canonically determined by the covering $f(s)$.

By construction the pair $(C(s), \kappa(s))$ depends algebraically on $s$. Hence we get a family of pairs 
$$
\alpha(h:\cZ \ra \cX):= (\cC, \kappa) \in \cS_2^+
$$ 
over $S$ with $\cC_s = C(s)$ and similarly for $\kappa$. Clearly the family is flat as a family of smooth curves.

Conversely, let $[(\cC,\kappa)$ over $S] \in \cS_2^+$. So for every closed point $s \in S$, 
the curve $C(s) :=\cC_s$ is a smooth curve of genus 2 and $\kappa(s) := \kappa_s$ an even theta characteristic on it. Recall that if $W = \{\omega_1, \dots, \omega_6 \}$ are the Weierstrass points of $C(s)$, the even theta characteristics are of the form $\kappa(s) = \cO_{C(s)}(\omega_i + \omega_j - \omega_k) \simeq  \cO_{C(s)}(\omega_l + \omega_m - \omega_n)$ where $\omega_i,\omega_j,\omega_k$ are different points of $W$ and  $\omega_l,\omega_m,\omega_n$ their complement in $W$. Hence $\kappa(s)$ determines 
a partition of $W$ into 2 complemetary subsets of 3 elements of $W$. We use the associated divisors $2\omega_i + 2 \omega_j + 2\omega_k$ and $2\omega_l+ 2 \omega_m+ 2\omega_n$ to define a degree 6 covering $\psi(s): C(s) \ra \PP^1$ which certainly factorizes via the hyperelliptic covering $\varphi(s): C(s) \ra \PP^1$ and a degree 3 covering ${\overline f}(s): \PP^1 \ra \PP^1$.
So we get the right hand triangle of diagram \eqref{commdiag}.

The map  ${\overline f}(s)$ is certainly unramified at the 6 points ${\overline \omega_i} = \varphi(\omega_i)$ and maps ${\overline \omega}_i, {\overline \omega}_j$ and 
${\overline \omega}_k$ to ${\overline p}_1$ and ${\overline \omega}_l, {\overline \omega}_m$ and 
${\overline \omega}_n$ to ${\overline p}_2$ say. According to the Hurwitz formula, ${\overline f}(s)$ is of ramification degree 4. Suppose first that ${\overline f}(s)$ is simply ramified. Let ${\overline \omega}_7, \dots,
{\overline \omega}_{10}$ denote the ramification points and ${\overline p}_3, \dots, 
{\overline p}_6$ the corresponding branch points. Then define hyperelliptic curves $\cY(s)$ 
with ramification points over ${\overline \omega}_1, \dots, {\overline \omega}_{10}$ and $\cX(s)$ 
with ramification points over ${\overline p}_1, \dots, {\overline p}_6$. 

Now suppose ${\overline f}(s)$ is simply ramified at ${\overline \omega}_7$ and ${\overline \omega}_8$ and doubly ramified at 
${\overline \omega}_9$ and let ${\overline p}_3, {\overline p}_4$ as well as ${\overline p}_5$ the corresponding 
branch points.  In this case, define hyperelliptic curves $\cY(s)$ 
with ramification points over ${\overline \omega}_1, \dots, {\overline \omega}_{8}$ and a
node over ${\overline \omega}_{9}$ and a curve $\cX(s)$ of genus 1 
with ramification points over ${\overline p}_1, \dots, {\overline p}_4$ and a
node over ${\overline p}_5$.

Finally, suppose ${\overline f}(s)$ is doubly ramified at ${\overline \omega}_7$ and 
${\overline \omega}_8$ and let ${\overline p}_3, {\overline p}_4$ denote the corresponding 
branch points. Then define hyperelliptic curves $\cY(s)$ 
with ramification points over ${\overline \omega}_1, \dots, {\overline \omega}_{6}$ and 
nodes over ${\overline \omega}_{7}$ and ${\overline \omega}_8$ and a curve $\cX(s)$ of genus 0 
with ramification points over ${\overline p}_1$ and  ${\overline p}_2$ and nodes 
over ${\overline p}_3$ and ${\overline p}_4$. 

Looking at the ramification one immediately checks that in any case the map ${\overline f}(s)$
lifts to an admissible covering $f(s): \cY(s) \ra \cX(s)$. So we obtain a commutative diagram, 
whose normalization is diagram \eqref{commdiag}. Moreover, it is clear from the constuction that
the diagram varies algebraically with $s \in S$. So we get a family $f: \cY \ra \cX$ over $S$. Since 
in any case the coverings $f_s = f(s)$ are not Galois, the Galois closure $h: \cZ \ra \cX$ of $f: \cY \ra \cX$ 
 over $S$ is an admissible $\mathfrak{S}_3$-cover over $S$.
Clearly we obtain an element  
$$
\beta((\cC, \kappa)) :=  [h: \cZ \ra \cX \; \mbox{over} \; S] \in {}_{\mathfrak{S}_3}{\widehat \cM}_2
$$ 
over $S$. It is easy to verify that $\beta$ is inverse to $\alpha$.

It remains to check that isomorphisms in the category ${}_{\mathfrak{S}_3}{\widehat \cM}_2$ are
mapped by $\alpha$ to isomorphisms in the category $\cS_2^+$ and conversely. This 
follows from the fact that the maps $\alpha$ and $\beta$ are 
determined completely by the Weierstrass points and the
 ramifications of the involved coverings and under isomorphisms these are mapped to 
Weierstrass points and ramifications of the same type.  
 \end{proof}

As mentioned above, both stacks ${}_{\mathfrak{S}_3}{\widehat \cM}_2$ and $\cS_2^+$ admit coarse moduli
spaces ${}_{\mathfrak{S}_3}{\widehat M}_2$ and $S_2^+$. Since an isomorphism of stacks induces an isomorphism of 
the associated coarse moduli spaces, we obtain as an immediate consequence the following corollary.  

\begin{cor}
The isomorphism $\alpha: {}_{\mathfrak{S}_3}{\widehat \cM}_2 \ra \cS_2^+$ induces an isomorphism of  
moduli spaces ${}_{\mathfrak{S}_3}{\widehat M}_2 \ra S_2^+$, also denoted by $\alpha$.
\end{cor}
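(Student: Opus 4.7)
The plan is to invoke the universal property of coarse moduli spaces and observe that the corollary is a formal consequence of Theorem \ref{thm3.1}. Recall that if $\cM$ is a Deligne--Mumford stack with coarse moduli space $M$, then for any scheme $N$ every morphism $\cM \to N$ factors uniquely through $M$; in particular, any isomorphism of stacks $\cM_1 \to \cM_2$ whose target also has a coarse moduli space $M_2$ induces a canonical morphism $M_1 \to M_2$ via the composition $\cM_1 \to \cM_2 \to M_2$ and the universal property.

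First I would apply this to the stack isomorphism $\alpha$ of Theorem \ref{thm3.1}, obtaining a canonical morphism of coarse moduli spaces
\[
\alpha: {}_{\mathfrak{S}_3}{\widehat M}_2 \longrightarrow S_2^+.
\]
Symmetrically, the inverse $\beta = \alpha^{-1}$ constructed in the proof of Theorem \ref{thm3.1} induces a canonical morphism
\[
\beta: S_2^+ \longrightarrow {}_{\mathfrak{S}_3}{\widehat M}_2.
\]

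Next I would verify that $\alpha$ and $\beta$ on the coarse moduli spaces are mutually inverse. By uniqueness in the universal property, the compositions $\beta \circ \alpha$ and $\alpha \circ \beta$ on the coarse moduli spaces are the morphisms induced, respectively, by the stack identities $\beta \circ \alpha = \mathrm{id}_{{}_{\mathfrak{S}_3}{\widehat \cM}_2}$ and $\alpha \circ \beta = \mathrm{id}_{\cS_2^+}$ established in Theorem \ref{thm3.1}. Since the identity on a stack induces the identity on its coarse moduli space, the induced morphisms on coarse moduli spaces are the identities as well, proving the corollary.

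There is no real obstacle here: the only thing to keep in mind is that the very construction of $\alpha$ in Theorem \ref{thm3.1} is functorial in the base $S$ and sends families to families with equivalent automorphism groupoids, so that $\alpha$ descends from stacks to moduli spaces without modification. On the level of $k$-points, $\alpha$ sends the admissible cover class $[h: Z \to X]$ to the pair $(C, \kappa)$ constructed from the Prym datum, which is the promised set-theoretic description of the induced map.
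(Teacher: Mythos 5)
Your proposal is correct and follows essentially the same route as the paper, which simply invokes the general fact that an isomorphism of stacks induces an isomorphism of the associated coarse moduli spaces and declares the corollary an immediate consequence of Theorem \ref{thm3.1}. You have merely spelled out the standard universal-property argument behind that fact, which is fine but adds nothing beyond what the paper takes for granted.
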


\section{Non-extension of the construction defining  $\alpha$}

In \cite{lo2} we considered the following subscheme of the moduli scheme 
${}_{\mathfrak{S}_3}{\overline M}_2$ of admissible 
$\mathfrak{S}_3$-covers of stable curves of genus 2:
$$
 {}_{\mathfrak{S}_3}{\widetilde M}_2 := \left\{ [h:Z \ra X] \in {}_{\mathfrak{S}_3}{\overline M}_2 \; \left| \begin{array}{c}
                                                                     p_a(Z) = 7 \; \mbox{and for any node} \; z \in Z \\
                                                                     \mbox{the stabilizer}\; \Stab (z) \; \mbox{is of order 3}

                                                                  \end{array} \right. \right\}               
$$
and we showed that
$$
 {}_{\mathfrak{S}_3}{\widetilde M}_2 =  {}_{\mathfrak{S}_3}{\widehat M}_2 \sqcup S
$$
where $S$ is the closed subscheme of $ {}_{\mathfrak{S}_3}{\widetilde M}_2$ consisting of reducible 
$\mathfrak{S}_3$-coverings.
In this section we study the question whether the isomorphism 
$\alpha: {}_{\mathfrak{S}_3}{\widehat M}_2 \ra S_2^+$ extends to a morphism 
${}_{\mathfrak{S}_3}{\widetilde M}_2 \ra \overline{S}_2^+$.

\begin{lem}\label{extension}
There is a non-empty open set $U \subset S$ such that $\alpha$ extends to a holomorphic map
${}_{\mathfrak{S}_3}{\widehat M}_2 \sqcup U \ra \overline{S}_2^+$.
\end{lem}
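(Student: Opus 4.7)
The aim is to extend the recipe of Theorem \ref{thm3.1} across a generic stratum of $S$. Recall that for $[h:Z\ra X]\in {}_{\mathfrak{S}_3}{\widehat M}_2$ the isomorphism $\alpha$ produces the genus-$2$ theta curve $C$ of the Prym of $f:Y=Z/\langle\tau\rangle\ra X$ together with the canonical even theta characteristic $\kappa$ determined by the factorization in diagram \eqref{commdiag}. For $[h:Z\ra X]\in S$ the base curve $X$ is reducible, and the plan is to show that on a dense open subset $U\subset S$ both ingredients degenerate in a controlled way, producing a well-defined element of $\overline{S}_2^+$ lying in one of the boundary divisors $A_1$ or $B_1$.

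I would take $U$ to be the open stratum of $S$ on which $X=X_1\cup_y X_2$ is the transverse union of two smooth elliptic curves meeting at a single node $y$. In this case the intermediate cover $f$ restricts to a non-cyclic \'etale triple cover $Y_i\ra X_i$ on each component, so the Prym of $f$ decomposes as a product $E_1\times E_2$ of two elliptic curves with the product principal polarization; its theta divisor is naturally $E_1\cup_p E_2$, a stable genus-$2$ curve of compact type. Diagram \eqref{commdiag} degenerates componentwise into two degree-$3$ maps $E_i\ra\PP^1$, and the divisor recipe $\omega_i+\omega_j-\omega_k\sim\omega_l+\omega_m-\omega_n$ from the proof of Theorem \ref{thm3.1} specialises on each $E_i$ to an even theta characteristic $\kappa_{E_i}$. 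Passing to the quasi-stable model $X'=E_1\cup_{p_1}E\cup_{p_2}E_2$ with $E\simeq\PP^1$ exceptional, and letting $\beta$ be the standard homomorphism vanishing on $E$, yields a point $(X',\kappa,\beta)\in\overline{S}_2^+$ lying in $A_1$ or $B_1$ according to the parities of $h^0(\kappa_{E_i})$. Algebraicity in families is immediate: every ingredient (Weierstrass points, the two degree-$3$ maps, the divisors defining $\kappa$) varies algebraically with the base, exactly as in the proof of Theorem \ref{thm3.1}.

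The last step is to verify that this candidate genuinely is the limit of $\alpha$ along one-parameter smoothings. I would take a generic arc in ${}_{\mathfrak{S}_3}{\widetilde M}_2$ from ${}_{\mathfrak{S}_3}{\widehat M}_2$ into $U$ and track the family of Pryms and their theta divisors over a punctured disc; the standard limit theory of principally polarized abelian surfaces of compact type identifies the limit theta divisor with $E_1\cup_p E_2$, and a direct parity/monodromy computation identifies the limit even theta characteristic with the pair $(\kappa_{E_1},\kappa_{E_2})$ produced above. I expect this compatibility check to be the main obstacle: a priori the limits of even theta characteristics could distribute parities across the two components differently from what our recipe predicts, so one must verify the match (or shrink $U$ accordingly). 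This is precisely why the lemma only claims existence of some non-empty open set $U$, rather than extending across all of $S$.
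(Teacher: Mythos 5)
Your plan is a constructive one: exhibit the boundary spin curve explicitly on the stratum where $X$ is a union of two elliptic curves, and then check it is the limit of $\alpha$. The paper does something entirely softer, and your version has a genuine gap exactly where you flag it. The lemma only asserts the existence of \emph{some} non-empty open $U\subset S$, and the paper gets this for free: ${}_{\mathfrak{S}_3}{\overline M}_2$ is a normal projective variety, so ${}_{\mathfrak{S}_3}{\widetilde M}_2$ is normal with singular locus of codimension $\geq 2$; a rational map from a smooth variety to a projective variety is holomorphic outside a set of codimension $\geq 2$ (\cite[p.491]{gh}); since $S$ is a divisor, the indeterminacy locus cannot contain $S$, and $U$ is the (non-empty, open) part of $S$ where the map is defined. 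No identification of the limit is needed.

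The gap in your argument is the step you yourself call ``the main obstacle'': you produce a \emph{candidate} point of $A_1\cup B_1$ from the product Prym $E_1\times E_2$, but you do not show that this candidate is the limit of $\alpha$ along smoothings, nor that the limit is independent of the chosen arc — and without that, you have neither well-definedness nor holomorphy of the extension on $U$. This is not a routine verification here. The paper's Proposition \ref{prop4.1} shows that the geometric recipe defining $\alpha$ (intersecting $\widetilde P$ with the translate $\Theta_q$) degenerates badly on $S$: the intersection $\widetilde P\cap\Theta_q$ is \emph{not} proper, containing the three curves $\Sigma_i$ of Lemma \ref{lem4.3}, which are $3{:}1$ covers of $Y_2$ of arithmetic genus $>2$ and cannot define the principal polarization. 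So one cannot read off the limit theta divisor, let alone the limit theta characteristic, from the naive degeneration of the construction; identifying the actual limit requires the kind of one-parameter analysis the paper only carries out later, in the proof of Proposition \ref{prop10.1} (where the target has to be replaced by $\overline{S_2^+}^{inv}$ to get an everywhere-defined extension). If you completed your compatibility check you would obtain strictly more than the lemma claims — namely that $U$ maps into $A_1$ — but as written the proof is not closed, whereas the two-line normality argument already suffices.
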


Note that ${}_{\mathfrak{S}_3}{\widehat M}_2 \sqcup U$ is open in 
${}_{\mathfrak{S}_3}{\widetilde M}_2$ with complement of codimension $\geq 2$.

\begin{proof}
The moduli space ${}_{\mathfrak{S}_3}{\overline M}_2$ of admissible $G$-covers of stable curves 
of genus 2 is a normal projective variety (see \cite{acg}). So the open subvariety 
${}_{\mathfrak{S}_3}{\widetilde M}_2 $ is also normal. Hence its singular locus is of codimension 
$\geq 2$. Now $\alpha$ defines a rational map $\overline {\alpha}$ from the smooth variety 
$({}_{\mathfrak{S}_3}{\widetilde M}_2)_{reg}$  to $\overline{S}_2^+$. Since any such rational 
map extends to a holomorphic map in codimension 1 (see \cite[p.491]{gh}), this implies the 
assertion. 
\end{proof}

However we have,

\begin{prop} \label{prop4.1}
The construction giving the isomorphism $\alpha: {}_{\mathfrak{S}_3}{\widehat M}_2 \ra S_2^+$ does not 
extend to ${}_{\mathfrak{S}_3}{\widetilde M}_2$.
\end{prop}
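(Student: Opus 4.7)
The plan is to take a point $[h: Z \to X] \in S$, i.e., an admissible $\mathfrak{S}_3$-cover whose base $X$ is a reducible stable genus-2 curve, and show that the recipe of Theorem~\ref{thm3.1} breaks down on it. Here $X = X_1 \cup_y X_2$ with $X_1, X_2$ smooth elliptic curves meeting at a node $y$, and the hypothesis $|\Stab(z)| = 3$ for nodes $z \in Z$ together with the local structure of admissible covers at $y$ forces $h$ to decompose compatibly into $\mathfrak{S}_3$-covers of the two elliptic components glued at the preimages of $y$.

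I would then trace the construction step by step. First, form the intermediate degree-3 cover $f: Y = Z/\langle\tau\rangle \to X$. The normalization $\widetilde f: \widetilde Y \to \widetilde X = X_1 \sqcup X_2$ is a disjoint union of non-cyclic triple covers of the elliptic components. Next, compute the Prym variety $(P(f), \Xi)$: because the cover splits over the components, $P(f)$ is isogenous to a product $E_1 \times E_2$, where each $E_i$ is an elliptic curve coming from the cover of $X_i$, and the principal polarization is (up to isomorphism) the product of the principal polarizations on the $E_i$. Consequently the theta divisor is reducible, $\Xi \cong (\{0\} \times E_2) \cup (E_1 \times \{0\})$, a union of two elliptic curves meeting transversally at the origin; in particular the ``Prym theta curve'' $C(s)$ that appears in the construction of $\alpha$ is not a smooth irreducible curve of genus 2.

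The obstruction now becomes visible. The construction defining $\alpha$ requires $C(s)$ to be smooth of genus 2 with six Weierstrass points $\omega_1,\ldots,\omega_6$; the degree-6 map $\psi(s): C(s) \to \PP^1$ is defined by the pencil generated by $2\omega_1 + 2\omega_2 + 2\omega_3$ and $2\omega_4 + 2\omega_5 + 2\omega_6$, and the theta characteristic is defined by $\kappa(s) = \cO_{C(s)}(\omega_1+\omega_2-\omega_3)$. On the reducible nodal curve $E_1 \cup E_2$ above, neither the Weierstrass-point data nor the pencil makes sense in the required way: a reducible curve of this shape has no canonical partition of Weierstrass points into two triples, and the divisor $\omega_1 + \omega_2 - \omega_3$ does not determine an even theta characteristic via the recipe of Theorem~\ref{thm3.1}. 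Hence the formula produces no element of $\overline{S}_2^+$ at a point of $S$, which gives the conclusion.

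The main obstacle in turning this outline into a complete proof is to verify that $(P(f), \Xi)$ really is a product-polarized abelian surface with reducible theta divisor (rather than, say, an abelian surface whose theta divisor is an irreducible singular curve of arithmetic genus 2 on which the recipe might still be interpreted). This can be done by choosing a one-parameter family of smooth $\mathfrak{S}_3$-covers degenerating to the reducible cover and tracking the degeneration of the Prym-theta pair using the description of the Prym worked out in \cite{lo2}. Note that this is compatible with Lemma~\ref{extension}: the underlying map extends on the open complement of codimension $\geq 2$ by normality, but the proposition asserts that the explicit construction itself, as given in the proof of Theorem~\ref{thm3.1}, does not.
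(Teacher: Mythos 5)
Your setup is sound — you correctly reduce to a general point of $S$, where $f = f_1\cup f_2 : Y_1\cup_{y_0}Y_2 \to X_1\cup_{x_0}X_2$, and you correctly assert (as in the paper, via \cite[Proposition 5.3]{lo2}) that the Prym is a product $P_1\times P_2$ of elliptic curves with the product principal polarization. But the obstruction you then extract from this is not an obstruction at all, so the argument has a genuine gap. You claim the construction fails because the limit ``theta curve'' is the reducible curve $(\{0\}\times P_2)\cup(P_1\times\{0\})$ rather than a smooth genus-$2$ curve, so that the Weierstrass-point/pencil/theta-characteristic recipe ``does not make sense.'' However, the target of the extension problem is Cornalba's compactification $\overline{S}_2^+$, whose boundary divisors $A_1$ and $B_1$ parametrize generalized spin curves supported on exactly such quasi-stable curves with two elliptic components; reducibility of the limit curve is therefore perfectly compatible with extending the construction, with the limit landing in $A_1\cup B_1$. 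Indeed Lemma \ref{extension} shows the map does extend over a dense open subset of $S$, and the proof of Proposition \ref{prop10.1} tracks a one-parameter degeneration (precisely the device you propose for closing your gap) and finds a well-defined limit spin curve $(C_1\cup C_2,(\kappa_{C_1},\kappa_{C_2}))\in A_1$. So your argument, if completed along the lines you sketch, would not produce a contradiction; if anything it points the other way.

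What actually fails, and what the paper proves, is that the explicit geometric construction defining $\alpha$ --- not merely its output read off from the abstract polarized Prym --- breaks down. Recall that in the smooth case $C=\Xi_1$ where $\widetilde P\cap\Theta_q=\Xi_1\cup\Xi_2\cup\Xi_3$, for $\Theta_q=\Theta-q$ and $\widetilde P=\Nm_f^{-1}(h_X)$. Carrying this over to a general point of $S$ (which already requires care: one must use the balanced component $\Pic^{(2,1)}(Y)$ and choose $q$ among the $q_1^i$ so that $\Theta_q$ lives in $\Pic^{(1,1)}(Y)$), one computes that $\widetilde P\cap\Theta_q$ contains three \emph{pairwise disjoint} curves $\Sigma_1,\Sigma_2,\Sigma_3$, each a $3{:}1$ cover of the genus-$2$ curve $Y_2$ and hence of arithmetic genus $>2$ (Lemma \ref{lem4.3}). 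If the construction extended, these three disjoint components would each have to represent the principal polarization of $P_1\times P_2$, i.e.\ be of arithmetic genus $2$ --- a contradiction. Your outline never examines the intersection $\widetilde P\cap\Theta_q$ itself, and that computation is the entire content of the proposition.
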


In order to see what we have to show, let us recall the definition of the map $\alpha$ in the smooth 
case. Let $Z \ra X$ be an \'etale $\mathfrak{S}_3$-covering of a smooth curve $X$ of genus 2 and 
$f: Y \ra X$ a corresponding non-cyclic \'etale degree-3 covering. Let $h_X$ denote the 
hyperelliptic line bundle on $X$ and $\Theta$ the canonical theta divisor in $\Pic^3(Y)$ given by the image of the map
$Y^{(3)} \ra \Pic^3(Y)$. If $q$ is one of the 2 Weierstrass points of $Y$ such that $f^{-1}(f(q))$ 
consists of 3 Weierstrass points, we consider the following translate of $\Theta$:
$$
\Theta_q := \Theta -q \subset \Pic^2(Y).
$$
Let $\Nm: \Pic^2(Y) \ra \Pic^2(X)$ be the norm map. We define $\widetilde P$
as the following translate of the Prym variety $P$ of $f$:
$$
\widetilde P := \Nm^{-1}(h_X) \subset \Pic^2(Y).
$$ 
Then we have (\cite[Corollary 4.12]{lo1}) 
$$
\widetilde P \cap \Theta_q = \Xi_1 \cup \Xi_2 \cup \Xi_3,
$$
where the $\Xi_i$ are algebraically equivalent divisors, each of them defining the principal polarization of 
$\widetilde P$ and thus pairwise isomorphic smooth curves of genus 2. If $\alpha(f) = (C, \kappa)$,
by definition one has 
$$
C = \Xi_1.
$$
For the proof of Proposition \ref{prop4.1} the theta characteristic $\kappa$ is not relevant. \\

Now,  let $f:Y \ra X$ be the non-cyclic degree-3 covering given by a general element of $S$. So
$ Y = Y_1 \cup_{y_0} Y_2$ and $X = X_1 \cup_{x_0} X_2$ with smooth curves $Y_i$ of genus 2 
(respectively $X_i$ of genus 1) intersecting transversally in the point $y_0$ (respectively $x_0$)
and $f = f_1 \cup_{y_0} f_2$ with non-cyclic degree-3 covers $f_i: Y_i  \ra X_i$ totally ramified in the 
point $y_{0i} \;(=y_0)  \in Y_i$ for $i=1,2$ (we denote the point $y_0$ by $y_{0i}$ when considered as a point of $Y_i$). In order to 
prove Proposition \ref{prop4.1}, we will show that, defining $\widetilde P$ and
$\Theta_q$ in the same way as in the smooth case, the intersection $\widetilde P \cap \Theta_q$ is not 
proper, i.e. not a divisor in $\widetilde P$. We need first some preliminaries.

Let $q_i^0, \dots, q^5_{i}$  denote the Weierstrass points of $Y_i$ for $i = 1,2$. Then $y_0$ is 
necessarily 
one of these points, say $y_0 = q^0_1 = q^0_2$. For $i=1,2$, the hyperelliptic involution of $X_i$
lifts to an involution on $Y_i$, which induce an involution on $Y$. Thus we get the following commutative diagram

\begin{equation} \label{diag3}
\xymatrix@R=25pt@C=40pt {
        Y = Y_1 \cup_{y_0} Y_2\ar[r]^{\gamma = \gamma_1 \cup_{y_0} \gamma_2}_{2:1} 
\ar[d]^{3:1}_{f = f_1 \cup_{x_0} f_2} & 
\PP^1  \cup_{\gamma(y_0)} \PP^1 \ar[d]_{3:1}^{\overline{f} = \overline{f}_1 
\cup_{\gamma(y_0)} 
\overline{f}_2}  \\
       X = X_1 \cup_{x_0} X_2  \ar[r]^{\delta = \delta_1 \cup_{x_0} \delta_2}_{2:1}  &  \PP^1 
\cup_{\delta(x_0)} \PP^1
    }
    \end{equation}
where $\gamma_i$ and $\delta_i$ are the hyperelliptic coverings.
It follows that the image of any Weierstrass point under the map $f_i$ is a ramification point of 
$\delta_i$. Since $\delta_i$ admits 4 ramification points, say $p^0_i, \dots, p^3_i$, we conclude 
that there is one point $p^j_i$ such that $f_i^{-1}(p^j_i)$ consists of 3 Weierstrass points and 3 points $p^j_i$ such that  $f^{-1}(p^j_i)$
contains only one Weierstrass point. Without loss of generality we may assume that 
$$
f(q^1_i )=f(q^2_i)=f(q^3_i)= p^1_i
$$ 
and 
$$
 f(q^4_i) = p^2_i, \quad f(q^5_i)=p^3_i \quad \mbox{and } \quad  f(q^0_i)=p^0_i = x_{0i}.
$$ 

The normalization of $Y$ (respectively $X$) is given by $n_Y = \iota_{Y_1} \cup \iota_{Y_2}$ 
(respectively $n_X = \iota_{X_1} \cup \iota_{X_2}$) where $\iota_{Y_i}$ (respectively $\iota_{X_i}$) 
denote the canonical embeddings $Y_i \ra Y$ (respectively $X_i \ra X$). They induce canonical isomorphisms of the Picard varieties
\begin{equation}  \label{eq4.2}
n_Y^*: \Pic(Y) \ra \Pic(Y_1) \times \Pic(Y_2) \qquad \mbox{and} \qquad n_X^*: \Pic(X) \ra 
\Pic(X_1) \times \Pic(X_2).
\end{equation} 
In the sequel we identify both sides, i.e. denote the elements of $\Pic(Y)$ and $\Pic(X)$ by pairs 
$(L_1,L_2)$ with $L_i \in \Pic(Y_i)$ (respectively $\Pic(X_i)$).

Now $\Pic^3(Y)$ consists of infinitely many components, however there are only 2 components
namely $\Pic^{(2,1)}(Y)$ and $\Pic^{(1,2)}(Y)$ (with the obvious notation) which admit a canonical
theta divisor (see \cite[Proposition 2.2]{b}; these are also the only balanced components in the sense 
of Caporaso's compactified Picard varieties \cite{c}). Since the situation is symmetric in $Y_1$ 
and $Y_2$, we work only with $\Pic^{(2,1)}(Y)$. Then the canonical theta divisor is
$$
\Theta : = \{ L \in \Pic^{(2,1)}(Y) = \Pic^2(Y_1) \times \Pic^1(Y_2) \;| \; h^0(L) \geq 1 \}
$$
with reduced subscheme structure. The following lemma is shown by using Riemann-Roch formula. 

\begin{lem} \label{lem4.2}
With the identifications $Y_1 + y_{01} = \{ \cO_{Y_1}(y_1 + y_{01}) \;|\; y_1 \in Y_1 \}$ and 
$Y_2 = \{\cO_{Y_2}(y_2) \;|\; y_2 \in Y_2 \}$ we have
$$
\Theta = \left[(Y_1 + y_{01}) \times \Pic^1(Y_2) \right] \cup \left[ \Pic^2(Y_1) \times Y_2 \right].
$$
\end{lem}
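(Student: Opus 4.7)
The plan is to compute $h^0(Y, L)$ directly from the normalization short exact sequence. Writing $\nu : Y_1 \sqcup Y_2 \to Y$ for the normalization, we have
$$0 \to L \to \nu_\ast \nu^\ast L \to k(y_0) \to 0,$$
and under the identification \eqref{eq4.2} the middle term is $(L_1,L_2)$ with the right-hand arrow given by $(s_1,s_2) \mapsto s_1(y_{01}) - s_2(y_{02})$. Taking global sections yields
$$0 \to H^0(Y,L) \to H^0(Y_1,L_1) \oplus H^0(Y_2,L_2) \xrightarrow{\phi} k,$$
so that $L \in \Theta$ if and only if $\ker \phi \neq 0$.

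The next step is to analyse each summand by Riemann--Roch on the smooth genus $2$ curves $Y_i$. For $L_1 \in \Pic^2(Y_1)$ one has $h^0(L_1) \geq 1$ automatically, and $L_1$ admits a section vanishing at $y_{01}$ precisely when $L_1 = \cO_{Y_1}(y_1 + y_{01})$ for some $y_1 \in Y_1$, i.e.\ when $L_1 \in Y_1 + y_{01}$. For $L_2 \in \Pic^1(Y_2)$ one has $h^0(L_2) \neq 0$ if and only if $L_2$ is effective, that is $L_2 \in Y_2 \subset \Pic^1(Y_2)$ via the Abel--Jacobi embedding.

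The conclusion then comes from a short case distinction on whether $L_2$ is effective. If $L_2 \notin Y_2$ then $H^0(L_2) = 0$, so any nonzero element of $\ker \phi$ is of the form $(s_1,0)$ with $s_1(y_{01})=0$; such an $s_1$ exists exactly when $L_1 \in Y_1 + y_{01}$. If instead $L_2 \in Y_2$, write $L_2 = \cO_{Y_2}(y_2)$ and let $s_2$ be its (unique up to scalar) section. When $y_2 = y_{02}$ the pair $(0,s_2)$ already lies in $\ker \phi$; when $y_2 \neq y_{02}$ we have $s_2(y_{02}) \neq 0$, so for any nonzero $s_1 \in H^0(L_1)$ the rescaled pair $(s_1, (s_1(y_{01})/s_2(y_{02}))\,s_2)$ lies in $\ker \phi$. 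In either subcase $L \in \Theta$, so the full stratum $\Pic^2(Y_1) \times Y_2$ is contained in $\Theta$, yielding the claimed description.

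There is no real obstacle here beyond bookkeeping; the whole argument is driven by the normalization sequence plus Riemann--Roch on each component, and the mild point is only to notice that the evaluation map $\phi$ automatically has rank $1$ as soon as at least one of the two components of $(L_1,L_2)$ carries a section not vanishing at the node.
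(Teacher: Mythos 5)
Your argument is correct, and it is exactly the Riemann--Roch computation the paper alludes to (the paper's ``proof'' is the single sentence that the lemma follows from Riemann--Roch), with the gluing condition at the node made explicit via the normalization sequence. The case analysis on the effectivity of $L_2$, together with the observation that every $L_1\in\Pic^2(Y_1)$ is effective, gives precisely the claimed set-theoretic description of $\Theta$, which is all that is required since $\Theta$ carries the reduced structure.
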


If a family of smooth $\mathfrak{S}_3$-coverings degenerates to the map $f:Y \ra X$ described above, 
the Weierstrass point $q$ chosen for the translate of $\Theta$, specializes to one of the points $q^i_1$ or $q^i_2$ 
with $i = 1,2$ or 3. In the latter case $\Theta - q^i_2$ is a divisor of $\Pic^{(2,0)}$ which is not balanced. So we need to work with
one of the $q^i_1$, say $q = q^1_1$ and define
$$
\Theta_q := \Theta - q \in \Pic^{(1,1)}(Y).
$$
There is exactly one line bundle of degree 2 with $h^0 = 2$ in $\Pic^{(1,1)}(Y)$, respectively 
$\Pic^{(1,1)}(X)$, namely 
$$
h_Y = (\cO_{Y_1}(q_{01}),\cO_{Y_2}(q_{02})),  \quad \mbox{respectively} \quad
h_X = (\cO_{X_1}(p_{01}),\cO_{X_2}(p_{02})).
$$

If $\Nm_f: \Pic^2(Y) \ra \Pic^2(X)$ denotes the norm map we define, as in the smooth case,
$\widetilde P$ as the following translate of the Prym variety $P$ of $f$:
$$
\widetilde P := \Nm_f^{-1}(h_X) \subset \Pic^{(1,1)}(Y).
$$ 

\begin{lem} \label{lem4.3}
The intersection $\Theta_q \cap \widetilde P$ contains the $3$ pairwise disjoint curves 
$$
\Sigma_i := \{ (\cO_{Y_1}(q^i_1 + y_{01} - q), \cO_{Y_2}(y_{02} + y_2 - y'_2)) \; | \;
y_2 \in Y_2, \;y'_2 \in f_2^{-1}f_2(y_2) \}
$$
for $i = 1,2,3$ which are $3:1$-coverings of $Y_2$.
\end{lem}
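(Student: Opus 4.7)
The plan is to verify the two containments $\Sigma_i \subseteq \Theta_q$ and $\Sigma_i \subseteq \widetilde{P}$ by direct computation, and then to analyze the resulting geometric structure. First I would check that $\Sigma_i \subseteq \Theta_q$: for a point $(L_1, L_2) \in \Sigma_i$ with $L_1 = \cO_{Y_1}(q_1^i + y_{01} - q)$, the shift $L_1 \otimes \cO_{Y_1}(q) = \cO_{Y_1}(q_1^i + y_{01})$ is of the form $\cO_{Y_1}(y_1 + y_{01})$ with $y_1 = q_1^i$, so by Lemma \ref{lem4.2} it lies in the component $(Y_1 + y_{01}) \times \Pic^1(Y_2)$ of $\Theta$. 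Hence $(L_1, L_2) + (\cO(q), 0) \in \Theta$, i.e.\ $(L_1, L_2) \in \Theta_q$.

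Next I would verify $\Sigma_i \subseteq \widetilde{P}$. Decomposing $\Nm_f = \Nm_{f_1} \times \Nm_{f_2}$ via \eqref{eq4.2} and using $f_1(q_1^i) = p_1^1 = f_1(q)$ together with $f_1(y_{01}) = p_1^0 = p_{01}$, one computes
$$
\Nm_{f_1}(L_1) = \cO_{X_1}(p_1^1 + p_1^0 - p_1^1) = \cO_{X_1}(p_{01}).
$$
Similarly $f_2(y_{02}) = p_{02}$ and $f_2(y_2) = f_2(y'_2)$ yield $\Nm_{f_2}(L_2) = \cO_{X_2}(p_{02})$, so $\Nm_f(L_1, L_2) = h_X$. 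The first component of $\Sigma_i$ being constant, $\Sigma_i$ is identified with the image of the morphism $\phi_i \colon Y_2 \times_{X_2} Y_2 \to \Pic^1(Y_2)$ given by $(y_2, y'_2) \mapsto \cO_{Y_2}(y_{02} + y_2 - y'_2)$. Since $f_2$ has degree $3$, the first projection $Y_2 \times_{X_2} Y_2 \to Y_2$ is $3:1$, and this structure equips $\Sigma_i$ with the claimed $3:1$ covering of $Y_2$.

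Finally, pairwise disjointness of the $\Sigma_i$ is immediate from inspection of their constant first components: for $i \neq j$ the class $\cO_{Y_1}(q_1^i - q_1^j)$ is a nonzero $2$-torsion element of $\Pic^0(Y_1)$, since distinct points on a curve of positive genus are never linearly equivalent; hence the three line bundles $\cO_{Y_1}(q_1^i + y_{01} - q_1^1)$ for $i = 1, 2, 3$ are pairwise distinct, and the three curves cannot meet. The main obstacle I anticipate is the $3:1$ assertion: the map $\phi_i$ contracts the diagonal of $Y_2 \times_{X_2} Y_2$ to the single point $\cO_{Y_2}(y_{02})$, and the hyperelliptic involution on $Y_2$ may produce additional identifications on the off-diagonal part via $(y_2, y'_2) \mapsto (\iota y'_2, \iota y_2)$, so some care is needed in specifying the scheme structure one imposes on $\Sigma_i$ in order to justify the stated degree. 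For the application to Proposition \ref{prop4.1}, however, only the positive-dimensionality of each $\Sigma_i$ inside $\widetilde{P} \cap \Theta_q$ is really used, and this is already evident from the parameterization.
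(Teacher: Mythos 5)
Your argument matches the paper's proof step for step: membership in $\Theta_q$ via Lemma \ref{lem4.2}, membership in $\widetilde P$ via the componentwise norm computation $\Nm_f=\Nm_{f_1}\times\Nm_{f_2}$, pairwise disjointness from the three distinct constant first factors $\cO_{Y_1}(q_1^i+y_{01}-q)$, and the $3:1$ degree from the three preimages $y_2'\in f_2^{-1}f_2(y_2)$. Your closing caveat --- that the parametrizing map from $Y_2\times_{X_2}Y_2$ may identify $(y_2,y_2')$ with $(\iota y_2',\iota y_2)$ and contracts the diagonal, so the degree statement needs a precise choice of structure on $\Sigma_i$ --- points at a genuine subtlety that the paper's one-line justification also leaves unaddressed, but as you observe it does not affect how the lemma is used in Proposition \ref{prop4.1}.
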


\begin{proof}
According to Lemma \ref{lem4.2} we have $\Sigma_i \subset \Theta_q$. By  the identifications
\eqref{eq4.2}, $\Nm_f = \Nm_{f_1} \times \Nm_{f_2}$ and one computes 
$$
\hspace{-3cm} \Nm_f((\cO_{Y_1}(q^i_1 + y_{01} - q), \cO_{Y_2}(y_{02} + y_2 - \iota_{Y_2}(y_2)) = {}{}{}{}
$$
$$ 
\hspace{2cm} = (\cO_{X_1}(p_1 + x_{01} - p_1), \cO_{X_2}(x_{02}  + f_2(y_2) - f_2(y_2))
$$
$$
\hspace{-1.2cm} = (\cO_{X_1}(x_{01}),\cO_{X_2}(x_{02})) = h_X.
$$
So $ \Sigma_i \subset \widetilde P$. The curves $\Sigma_i$ are pairwise disjoint, since $q = q^1_1$ and the line bundles 
$\cO_{X_1}(q^i_1 + y_{01} - q^1_1)$ are pairwise different. The last assertion follows from the fact that $f_2$ is a 
$3:1$-covering and hence there are 3 preimages $y_2' \in f_2^{-1}f_2(y_2)$.
\end{proof}

\begin{proof}[Proof of Proposition 4.2]
We may consider $\Pic^{(1,1)}(Y)$ as an abelian variety and
$\widetilde P$ as an abelian subvariety, since both contain the distinguished point $h_Y$. 
According to \cite[Proposition 5.3]{lo2},
$$
\widetilde P = P_1 \times P_2
$$
with elliptic curves $P_i = \Prym(f_i)$ for $i=1, 2$ and canonical principal polarization.
On the other hand, the principal polarization of $\Pic^{(1,1)}(Y)$ is defined by the divisor $\Theta_q$. 
So if the construction of $\alpha$ would extend to the closed subvariety $S$, the divisor $\Theta_q$ would restrict to a divisor 
defining the threefold of the canonical principal polarization of $\widetilde P$. Being pairwise disjoint, the curves $\Sigma_i$ 
would define the canonical principal polarization i.e. would be isomorphic to $P_1 \times \{0\} \cup \{0\} \times P_2$.
But this contradicts Lemma \ref{lem4.3} since, the curve $Y_2$ being of genus 2, any 3:1 
covering is of arithmetic genus $>2$ and hence $\Sigma_i$ cannot define a principal polarization. 
\end{proof}

\section{The nef cone of $\overline{S}_2^+$}

In this section we compute the cone of numerically effective divisors of $\overline{S}_2^+$
in the rational Picard group $\Pic_{\QQ}(\overline{S}_2^+)$.
For this we consider the moduli space $\overline{M}_{0,[3,3]}$ of stable curves of genus 0 with 
6 unordered marked points partitioned into 2 sets of 3 points. If $\overline {M}_{0,6}$ denotes the 
usual moduli space of stable 6-pointed curves of genus 2 and 
$$
G := (\mathfrak{S}_3 \times \mathfrak{S}_3) \rtimes \langle \tau \rangle
$$
where the first $\mathfrak{S}_3$ acts on the numbers $1,2,3$, the second on the numbers
$4,5,6$ and $\tau= (14)(25)(36)$, then  
$$
\overline{M}_{0,[3,3]} = \overline{M}_{0,6} / G.
$$
Here we consider the numbers as indices of the marked points.
We use the fact (see \cite[Lemma 20]{k}) that there is a canonical isomorphism
$$
\overline{M}_{0,[3,3]} \simeq \overline{S}_2^+.
$$
In the sequel we often identify both spaces and denote corresponding divisors by the same 
letter. Moreover, the composed map $\overline {M}_{0,6} \ra \overline{S}_2^+$ maps the boundary of 
$\overline {M}_{0,6}$ onto the boundary of  $\overline{S}_2^+$ (\cite{k}).

Recall that the boundary of  $\overline{M}_{0,6}$ consists of divisors $\Delta_S$, where 
$S\subset \{ 1,\ldots, 6\}$ with $|S|, |S^c| \geq 2$. Each $\Delta_S$ is the closure of the points
corresponding to reducible curves with one node and where $S$ points are marked in one 
component. We denote the class of $\Delta_S$ in $\Pic_{\QQ}(\overline{M}_{0,6})$ by the same letter.

The divisors in  $\overline{M}_{0,[3,3]}$ can be regarded as the divisors
in $\overline{M}_{0,6}$ which are invariant under the action of $G$.
The isomorphism $\overline{M}_{0,[3,3]} \simeq \overline{S}_2^+$ is defined in a natural way 
 by associating to a rational curve with six marked points the admissible double 
covering ramified over those points (\cite{k}). Moreover, it  maps the following $G$-invariant divisors into the
boundary divisors of  $\overline{S}_2^+$(see \cite[table in section 3.1]{k}) :

\begin{eqnarray} \label{eq3.1}
\Delta^{11}:= \sum_{S \in Orb_G (12)} \Delta_S & \mapsto & A_0\\ \label{eq3.2}
 \Delta^{12}:=  \sum_{S \in Orb_G (14)} \Delta_S & \mapsto & B_0\\ \label{eq3.3}
 \Delta_{123}^{c}:=  \sum_{S \in Orb_G (124)} \Delta_S & \mapsto & A_1\\ \label{eq3.4}
    \Delta_{123} := \Delta_{\{1,2,3\}}  & \mapsto & B_1
\end{eqnarray}

In order to see the $G$-invariance of these divisors, use the fact  that if $\{i_1, \dots, i_6\} 
= \{1, \dots,6\}$,
then $\Delta_{\{i_1,i_2,i_3\}} = \Delta_{\{i_4,i_5,i_6\}}$.
When we consider the $G$-invariant classes 
$\Delta^{11}, \Delta^{12}$, $\Delta_{123}^{c}$ and $\Delta_{123}$
as elements of $\Pic_{\QQ}(\overline{M}_{0,[3,3]})$, we denote them respectively  by $A_0,B_0, A_1$ and $B_1$. 
According to \cite{k} the classes $A_0,B_0,A_1$ and $B_1$
generate the $\QQ$-vector space $\Pic_{\QQ}(\overline{M}_{0,[3,3]})$.

In order to compute the nef cone of $\overline{M}_{0,[3,3]}$ we introduce some 
$F$-curves. An {\it $F$-curve in} $\overline{M}_{0,6}$ is obtained as the image of the map $\nu: \overline{M}_{0,4} 
\ra \overline{M}_{0,6}$ defined by attaching one 3-pointed curve at one of the 4 
marked points or two 2-pointed curves  at two of the marked points  (\cite[end of Section 2]{fg}). Note that, as a complete 
intersection curve in $\overline{M}_{0,6}$, the curve $F$ always consists of 3 irreducible components, in the first case 
the attached curve is semistable with 2 irreducible components, whereas in the the second case the 2 attached curves 
are irreducible.

We consider the following $F$-curves. Let $\Gamma_1$ (respectively $\Gamma_2$) denote the 
curve in $\overline{M}_{0,6}$ whose elements are constructed by attaching to a spine labelled 
with the subset $\{1,2,3\}$ (respectively $\{1,2,4\}$) of  $\{1,\dots ,6\}$ to a 4th point a rational
curve consisting of 2 components where the component directly attached to the spine is labelled with 
4 (respectively 3) and the other curve with 5 and 6 (see Figure 1). 
This defines a curve in $\overline{M}_{0,6}$, since the 4th point of the spine moves freely, whereas the other components 
have 3 marked points: the nodes and the labelled points.

Let $\Gamma_3$ be the curve constructed by attaching 
two 2-pointed irreducible curves to the spine where exactly one of the attaching points moves and the 
2 points on the spine are labelled with $\{1,2\}$.
Similarly, we define $\Gamma_4$ (respectively $\Gamma_5$)  by attaching two 2-pointed 
curves and where the labelled points on the spine are $\{1,4\}$ and one tail is labelled 
with $\{2,3\}$ (respectively $\{2,5\}$) (see Figure 1). 

\begin{figure}[h!]
\begin{center}
\includegraphics[width=10cm]{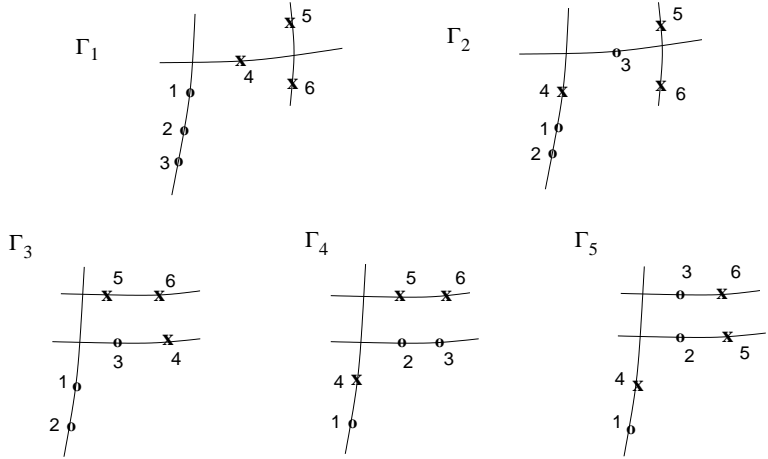}
\caption{Generic points of the F-curves}
 \end{center}
\end{figure}

In principle one should consider the orbit of an $F$-curve under the action of the group $G$, but 
in order to determine the inequalities defining the nef-cone it will be sufficient to intersect the 
divisors with a representative of the orbit.

\begin{lem} \label{lem3.2}
A rational divisor class $aA_0 + b B_0 + c A_1 + d B_1$ of  $\overline{S}_2^+$ 
is nef (respectively ample) 
if and only if the corresponding class 
$D := a\Delta^{11} + b \Delta^{12} + c \Delta_{123}^c + d \Delta_{123}$ 
of $\overline {M}_{0,6}$ satisfies
$$ 
(D \cdot \Gamma_i) \geq 0 \quad (respectively >0)
$$
for $i = 1,\dots, 5$.
\end{lem}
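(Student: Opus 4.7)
The plan is to lift the question, via the isomorphism $\overline{S}_2^+ \simeq \overline{M}_{0,[3,3]} = \overline{M}_{0,6}/G$, to characterising a $G$-invariant class on $\overline{M}_{0,6}$, where the $F$-conjecture is available.

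First I would use the standard fact that, for a finite surjective morphism between (normal) projective varieties, a $\QQ$-divisor is nef (respectively ample) if and only if its pullback is. Applied to the quotient map $\pi : \overline{M}_{0,6} \ra \overline{S}_2^+$, and combined with the identifications \eqref{eq3.1}--\eqref{eq3.4}, this reduces the statement to characterising nefness and ampleness of $D = a\Delta^{11}+b\Delta^{12}+c\Delta^c_{123}+d\Delta_{123}$ (which differs from $\pi^*(aA_0+bB_0+cA_1+dB_1)$ only by positive rational multiplicities along each $G$-orbit of boundary divisors) on $\overline{M}_{0,6}$.

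Second, I would invoke the $F$-conjecture for $\overline{M}_{0,n}$ in the range where it is known, and in particular for $n=6$ by Keel--McKernan: a $\QQ$-divisor on $\overline{M}_{0,6}$ is nef (respectively ample) if and only if its intersection with every $F$-curve is non-negative (respectively strictly positive). Since $D$ is $G$-invariant, the number $D\cdot F$ depends only on the $G$-orbit of $F$, so it is enough to test against one representative of each such orbit.

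The remaining work is combinatorial: the enumeration of $G$-orbits of $F$-curves. Every $F$-curve in $\overline{M}_{0,6}$ is determined by an unordered partition of $\{1,\dots,6\}$ into four non-empty parts, together with a $2$-component structure on any part of size three. A direct count produces two $G$-orbits of type $(3,1,1,1)$ (according as the size-three part is of \emph{pure} type, such as $\{1,2,3\}$, or \emph{mixed} type, such as $\{1,2,4\}$) and three of type $(2,2,1,1)$ (distinguished by whether the number of mixed pairs is $0$, $1$, or $2$). Using the standard intersection formula for $F$-curves on $\overline{M}_{0,n}$ (leg-pairings contributing $+1$ transversely, and full subtree legs contributing $-1$ by self-intersection) one verifies that $\Gamma_1, \Gamma_2, \Gamma_4, \Gamma_3, \Gamma_5$ furnish representatives of these five orbits, and combining with the previous two steps yields the claim. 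The hardest part of the argument is therefore this orbit enumeration together with the explicit computation of the resulting five intersection profiles against the four $G$-orbits of boundary divisors $\Delta^{11},\Delta^{12},\Delta^{c}_{123},\Delta_{123}$.
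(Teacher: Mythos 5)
Your argument follows the paper's proof essentially verbatim: descend nefness and ampleness along the finite quotient $\overline{M}_{0,6}\ra\overline{M}_{0,6}/G\simeq\overline{S}_2^+$, invoke the Keel--McKernan $F$-conjecture for $\overline{M}_{0,6}$, and reduce by $G$-invariance to checking one representative per orbit of $F$-curves. The only cosmetic difference is that the paper enumerates six $G$-orbits of $F$-curves and then notes that the extra representative $\Gamma_2'$ is numerically equivalent to $\Gamma_2$, whereas you count the five orbits of underlying partitions directly --- equivalent, since by the same Keel--McKernan intersection formula the numerical class of an $F$-curve depends only on its partition.
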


\begin{proof}
The class of $D$ being $G$-invariant, $D$ is nef as a divisor of $\overline{M}_{0,[3,3]}$ if and 
only if it is nef as a divisor of $\overline{M}_{0,6}$. Hence according to \cite[Theorem 1.3]{kmc}
$D$ is nef if and only if it intersects any $F$-curve of $\overline{M}_{0,6}$ non-negatively.
Since $D$ is $G$-invariant, it suffices to check this for a representative of the $G$-orbits of 
$F$-curves. Now it is easy to check that there are exactly 6 $G$-orbits with representatives $\Gamma_1,
\dots, \Gamma_5$ and $\Gamma'_2$. Here $\Gamma'_2$ differs from $\Gamma_2$ only
by labelling the middle component with 5 (instead of 3) and the last component with 3 and 6
(instead of 5 and 6). However, by 
\cite[Lemma 4.3]{kmc} $\Gamma'_2$ is numerically equivalent to $\Gamma_2$. So this implies the assertion on the nefness and also on the ampleness of the divisor class.
\end{proof}

\begin{lem} \label{lem3.3}
The intersection numbers of the divisors  $\Delta^{11}, \dots, \Delta_{123}$ with $\Gamma_i$ are
$$
(\Delta^{11} \cdot \Gamma_1) =  3 , \quad  (\Delta^{12} \cdot \Gamma_1) = 0, \quad 
(\Delta_{123}^c \cdot \Gamma_1) = 0 , \quad (\Delta_{123} \cdot \Gamma_1) =   -1 ;
$$
$$
(\Delta^{11} \cdot \Gamma_2) =   1, \quad  (\Delta^{12} \cdot \Gamma_2) = 2,\quad 
(\Delta_{123}^c \cdot \Gamma_2) =  -1 , \quad 
(\Delta_{123} \cdot \Gamma_2) =   0 ;
$$
$$
(\Delta^{11} \cdot \Gamma_3) =  0 , \quad  (\Delta^{12} \cdot \Gamma_3) = -1,\quad 
(\Delta_{123}^c \cdot \Gamma_3) = 2 , \quad 
(\Delta_{123} \cdot \Gamma_3) =   0 ;
$$
$$
(\Delta^{11} \cdot \Gamma_4) =   -2, \quad  (\Delta^{12} \cdot \Gamma_4) = 1,\quad 
(\Delta_{123}^c \cdot \Gamma_4) =  1 , \quad 
(\Delta_{123} \cdot \Gamma_4) =   1;
$$
$$
(\Delta^{11} \cdot \Gamma_5) =  0 , \quad  (\Delta^{12} \cdot \Gamma_5) = -1,\quad 
(\Delta_{123}^c \cdot \Gamma_5) =   2, \quad 
(\Delta_{123} \cdot \Gamma_5) =   0;
$$
\end{lem}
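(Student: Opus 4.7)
My plan is to compute each of the twenty intersection numbers directly by pulling back the boundary divisors along the map $\nu_i:\overline{M}_{0,4}\to\overline{M}_{0,6}$ whose image is $\Gamma_i$. The two inputs are standard: a $\psi$-class of a marked point on $\overline{M}_{0,4}\cong\PP^1$ has degree $1$, and the normal bundle formula reads $\cO(\Delta_S)|_{\Delta_S}=-\psi_p-\psi_q$, where $p$ and $q$ are the two branches at the node corresponding to $S$.

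Before any intersection is computed, I would expand the four divisors in the standard boundary basis. Going through the $G$-orbits gives $\Delta^{11}=\Delta_{12}+\Delta_{13}+\Delta_{23}+\Delta_{45}+\Delta_{46}+\Delta_{56}$ (six terms), $\Delta^{12}=\sum_{i\in\{1,2,3\},\,j\in\{4,5,6\}}\Delta_{ij}$ (nine terms), $\Delta_{123}^{c}$ equals the sum of the nine $\Delta_{ijk}$ with exactly two indices in $\{1,2,3\}$, and $\Delta_{123}=\Delta_{\{1,2,3\}}$ is a single term.

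For each $\Gamma_i$ I would then read off the combinatorial type of a general point: the partition $I_1\sqcup\cdots\sqcup I_4=\{1,\ldots,6\}$ together with the shape of the tails ($\Gamma_1,\Gamma_2$ have one tail which is a chain of two rigid components, while $\Gamma_3,\Gamma_4,\Gamma_5$ have two rigid non-trivial tails). This determines both the three ``spine degenerations''---the images of the three boundary points of $\overline{M}_{0,4}$, each meeting a unique $\Delta_S$ transversely---and the ``always present'' nodes of the tree, for which $\Gamma_i\subset\Delta_S$. The intersection number then splits into three cases. If $\Gamma_i\not\subset\Delta_S$ it equals the number of spine degenerations landing in $\Delta_S$, each contributing $+1$. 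If $\Gamma_i\subset\Delta_S$ via a tail-to-spine attaching node, exactly one of $\psi_p,\psi_q$ pulls back to a degree-one class on the moving spine while the other vanishes on the rigid tail, giving $-1$. Finally, if $\Gamma_i\subset\Delta_S$ via an internal node of a chain tail (only possible for $\Gamma_1,\Gamma_2$), both $\psi$-classes vanish and the contribution is $0$.

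The remainder is pure bookkeeping: for each $\Gamma_i$ I list the three spine-degeneration divisors and the always-present nodes, assign $+1$, $-1$ or $0$ according to the case analysis, and collect the contributions within each of the four orbit sums. The only subtlety is distinguishing the internal chain node (contribution $0$) from the tail-to-spine node (contribution $-1$) for $\Gamma_1$ and $\Gamma_2$; handling this correctly is what produces $\Gamma_1\cdot\Delta^{11}=3$ rather than $2$ and $\Gamma_1\cdot\Delta_{123}=-1$ rather than $0$. I do not expect any further conceptual obstacle.
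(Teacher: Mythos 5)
Your proposal is correct, and all twenty numbers it produces agree with the statement (I checked them independently against the partitions $\{1\},\{2\},\{3\},\{4,5,6\}$ for $\Gamma_1$; $\{1\},\{2\},\{4\},\{3,5,6\}$ for $\Gamma_2$; $\{1\},\{2\},\{3,4\},\{5,6\}$ for $\Gamma_3$; $\{1\},\{4\},\{2,3\},\{5,6\}$ for $\Gamma_4$; and $\{1\},\{4\},\{2,5\},\{3,6\}$ for $\Gamma_5$). The difference from the paper is one of self-containedness rather than strategy: the paper's proof is a one-line appeal to Keel--McKernan's Lemma 4.3, which states that an $F$-curve with partition $I_1,\dots,I_4$ meets $\Delta_S$ in $+1$ if $S$ or $S^c$ equals some $I_i\cup I_j$, in $-1$ if $S$ or $S^c$ equals some $I_i$ with $|I_i|\geq 2$, and in $0$ otherwise, after which only the orbit bookkeeping remains. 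Your three-case analysis (transverse spine degeneration $\mapsto +1$, tail-to-spine node $\mapsto -1$ via $\left.\cO(\Delta_S)\right|_{\Delta_S}=-\psi_p-\psi_q$ with exactly one $\psi$ of degree one on the $\overline{M}_{0,4}$ factor, internal chain node $\mapsto 0$) is precisely a proof of that lemma in the cases needed, so you re-derive the black box instead of citing it. What your route buys is transparency and a built-in consistency check: the $0$ from the internal chain node confirms Keel--McKernan's assertion that the numerical class of an $F$-curve depends only on the partition and not on the chosen rigid tails. What the citation buys the paper is brevity and the reassurance that the transversality at the spine degenerations (which you assert but would still need to justify, e.g.\ by the node-smoothing coordinate on $\overline{M}_{0,4}$ near its boundary) is already established in the reference. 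The orbit expansions you give for $\Delta^{11}$, $\Delta^{12}$, $\Delta_{123}^{c}$, $\Delta_{123}$ (six, nine, nine and one term respectively, counted as distinct divisors) are the correct ones, and they are consistent with the relation $3A_0-2B_0-A_1+9B_1=0$ derived later in the paper.
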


\begin{proof}
This follows by an immediate computation using \cite[Lemma 4.3]{kmc} which gives the intersection numbers of any boundary divisor with any $F$-curve of $\overline{M}_{0,n}$. 
\end{proof}

From this we immediately conclude the following criterion for a rational divisor class on
$\overline{M}_{0,[3,3]} = \overline{S}_2^+$ to be nef or ample. 

\begin{prop} \label{prop3.4}
A $\QQ$-divisor $D \equiv aA_0 + + bB_0 + cA_1 + d B_1$ is nef (respectively ample)
if and only if the following inequalities are satisfied 
$$
3a \geq d,  \quad a + 2b  \geq c, \quad 2c \geq b \quad \mbox{and} \quad b+c+d \geq 2a 
$$
(respectively all inequalities are strict).
\end{prop}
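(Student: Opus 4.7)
The proof of Proposition \ref{prop3.4} is essentially a direct computation combining the two preceding lemmas, so the plan reduces to a bookkeeping step with one small observation about redundancy.

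The strategy is as follows. By Lemma \ref{lem3.2}, the class $D = aA_0 + bB_0 + cA_1 + dB_1$ is nef (respectively ample) on $\overline{S}_2^+ \simeq \overline{M}_{0,[3,3]}$ if and only if the lifted class $a\Delta^{11} + b\Delta^{12} + c\Delta_{123}^{c} + d\Delta_{123}$ on $\overline{M}_{0,6}$ pairs non-negatively (respectively positively) with each of the $F$-curves $\Gamma_1,\dots,\Gamma_5$. So the first step is to write down $D \cdot \Gamma_i$ for $i=1,\dots,5$ by expanding linearly and reading off the coefficients given in Lemma \ref{lem3.3}.

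A straightforward calculation yields
\begin{align*}
D \cdot \Gamma_1 &= 3a - d, \\
D \cdot \Gamma_2 &= a + 2b - c, \\
D \cdot \Gamma_3 &= -b + 2c, \\
D \cdot \Gamma_4 &= -2a + b + c + d, \\
D \cdot \Gamma_5 &= -b + 2c.
\end{align*}
Requiring each of these to be $\geq 0$ (respectively $> 0$) gives precisely the four inequalities in the statement once we observe that $D\cdot\Gamma_3 = D\cdot\Gamma_5$, so the constraint coming from $\Gamma_5$ is redundant.

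The only point that requires a moment's thought is this redundancy, but it is not really an obstacle: the inequalities from $\Gamma_3$ and $\Gamma_5$ coincide simply because the corresponding coefficient vectors in Lemma \ref{lem3.3} are identical, which in turn reflects the fact that both $F$-curves consist of two irreducible 2-pointed tails attached to a spine in a way that, after the $G$-action, cuts out the same boundary divisors with the same multiplicities. Combining this observation with the direct calculation above finishes the proof in both the nef and the ample case, since Lemma \ref{lem3.2} gives the equivalence for strict inequalities as well. There is no genuinely hard step; the content of the proposition is already packaged in Lemmas \ref{lem3.2} and \ref{lem3.3}.
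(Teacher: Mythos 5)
Your proposal is correct and follows exactly the paper's own argument: expand $D\cdot\Gamma_i$ linearly using the intersection numbers of Lemma \ref{lem3.3}, note that $\Gamma_3$ and $\Gamma_5$ yield the same inequality, and invoke Lemma \ref{lem3.2} for both the nef and ample cases. No differences worth noting.
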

\begin{proof}
According to Lemma \ref{lem3.3} and the identifications \eqref{eq3.1}, $\dots$, \eqref{eq3.4}  
we have
$$
(D \cdot \Gamma_1) =3a - d, \quad ( D \cdot \Gamma_2) = a + 2b -c,  
$$
$$
(D \cdot \Gamma_3) = (\Gamma \cdot \Gamma_5) = -b + 2c, \quad (D \cdot \Gamma_4) = -2a + b + c + d.
$$

So Lemma \ref{lem3.2} implies the assertion.
\end{proof}

\begin{rem} According to \cite{k} the vector space 
$\Pic_{\QQ}(\overline{S}_2^+) = \Pic_{\QQ}(\overline{M}_{0,[3,3]})$ is of dimension 3. Hence there is a non-trivial relation
$$
a A_0 + b B_0 + c A_1 + d B_1 = 0
$$ 
between the classes $A_i$ and $B_i$. Intersecting with the curves $\Gamma_i$ we get the system of equations
$$
3a = d, \quad a + 2b = c, \quad 2c = b, \quad b+c+d = 2a.
$$
This gives the relation
\begin{equation}  \label{eq4.5}
3A_0 -2B_0 - A_1 + 9 B_1 = 0.
\end{equation} 
in $\Pic_{\QQ}(\overline{S}_2^+)$. Note that this is different from the relation proved in \cite[Lemma 21 (ii)]{k}.
\end{rem}

Since  $\Pic_{\QQ}(\overline{S}_2^+)$ is of dimension 3, \eqref{eq4.5} implies that 
$\{ A_0, B_0, B_1\}$ is a basis of $\Pic_{\QQ}(\overline{S}_2^+)$. In terms of this basis Proposition \ref{prop3.4} 
can be rephrased as

\begin{cor} \label{cor4.5}
A divisor $D = aA_0 + bB_0 + cB_1$ of $\overline{S}_2^+$ is nef (respectively ample) if and only if 
$$
 b \leq 0 \quad \mbox{and} \quad \max (\frac{1}{3}c, -2b) \leq a \leq \frac{1}{2}(b+c).
$$
\end{cor}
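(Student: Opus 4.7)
The plan is to substitute the basis $\{A_0,B_0,B_1\}$ directly into the four inequalities of Proposition \ref{prop3.4} by using the relation \eqref{eq4.5} to eliminate the $A_1$-coefficient. Concretely, given $D = aA_0 + bB_0 + cB_1$, I would write this as $D = a'A_0 + b'B_0 + c'A_1 + d'B_1$ with $a' = a$, $b' = b$, $c' = 0$, $d' = c$ (this is the unique such expression with zero $A_1$-component, and by \eqref{eq4.5} every class admits such a presentation).

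With this choice, the four inequalities of Proposition \ref{prop3.4} become
\begin{equation*}
3a \geq c, \qquad a + 2b \geq 0, \qquad 0 \geq b, \qquad b + c \geq 2a,
\end{equation*}
which I would then rearrange: the third gives $b \leq 0$, the first and second say $a \geq \tfrac{1}{3}c$ and $a \geq -2b$ respectively (hence $a \geq \max(\tfrac{1}{3}c, -2b)$), and the fourth says $a \leq \tfrac{1}{2}(b+c)$. This is exactly the inequality system stated in the corollary. The ample case follows in the same way by requiring all four inequalities to be strict.

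The only subtle point is checking that it is legitimate to use the basis representation $D = aA_0 + bB_0 + cB_1$ in applying Proposition \ref{prop3.4}, since the latter is phrased in terms of a non-basis presentation $a'A_0 + b'B_0 + c'A_1 + d'B_1$. This is not actually an obstacle: nefness and ampleness depend only on the numerical class of $D$, and Proposition \ref{prop3.4} was derived by computing intersection numbers against the curves $\Gamma_i$, which are well-defined on classes. Thus any presentation of the class in $\Pic_{\QQ}(\overline{S}_2^+)$ yields the same intersection numbers, and we are free to pick the one with $c' = 0$. This makes the argument essentially a one-line substitution, with no further computation needed beyond the arithmetic above.
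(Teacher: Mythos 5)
Your substitution is exactly what the paper does (it states the corollary as a direct rephrasing of Proposition \ref{prop3.4} via the relation \eqref{eq4.5}, without writing out the arithmetic), and your justification that the four linear functionals are well-defined on classes is correct since each $\Gamma_i$ intersects the relation \eqref{eq4.5} trivially. The proof is correct and essentially identical to the paper's.
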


\section{Proof of part (1) of Theorem 2}

As in Section 2 let $\overline{\cS}_2^+$ and $\overline{\cM}_2$  denote the corresponding moduli stacks of  $\overline{S}_2^+$ and
$\overline{M}_2$.
Consider the following commutating diagram:
$$
\xymatrix{
        \overline{\cS}_2^+  \ar[r]^{\tilde{q}}  \ar[d]_{\tilde{\pi}}  & \overline{S}_2^+ \ar[d]^{\pi} \\
         \overline{\cM}_2 \ar[r]^{q}  & \overline{M}_2 
    }
 $$
where $\pi$ (resp. $\tilde{\pi}$) is the forgetful map $[(C, \kappa)] \mapsto [C]$ and  $\tilde{q}$ and
$q$ are the natural maps. The first aim is to give a formula for the class $K_{\overline{\cS}_2^+}$ in terms of
divisors in $\overline{S}_2^+$. 

Let $E \subset  M_2 $ the locus of the 
bielliptic curves. Any bielliptic curve $C \in E$ has an equation of the form 
\begin{equation}  \label{eq6.1}
y^2 = (x^2-x_1^2)(x^2-x_2^2)(x^2-x_3^2).
\end{equation} 
Thus the Weierstrass points of $C$ are $\pm x_1, \pm x_2, \pm x_3$. Let $\rho$ denote the bielliptic involution in $C$.

\begin{lem} \label{biellocus}

The preimage of the bielliptic locus $E$ in $\overline{S}_2^+$ decomposes as
$$
\pi^*(E) = \widetilde {E} \cup \widetilde {E}'
$$
where $\widetilde {E}$ represents the spin curves $(C,\kappa)$ with $C$ admitting a bielliptic 
involution  $\rho$ such that $\rho^*(\kappa) = \kappa$. 
Moreover the map $\pi: \widetilde {E} \mapsto E $  is finite of degree $4$ . In particular 
$\widetilde {E}$ is the bielliptic locus of $\overline{S}_2^+$.
\end{lem}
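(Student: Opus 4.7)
My plan is to analyze, over a generic bielliptic curve $C \in E$, the action of the bielliptic involution $\rho$ on the ten even theta characteristics of $C$. From the affine equation \eqref{eq6.1} the six Weierstrass points of $C$ are $\pm x_1, \pm x_2, \pm x_3$ and $\rho$ acts by $\omega \mapsto -\omega$, so $\rho$ has no fixed Weierstrass points and its orbits on the six Weierstrass points are precisely the three pairs $\{x_i, -x_i\}$ for $i=1,2,3$.

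Next I would invoke the bijection (recalled already in the proof of Theorem \ref{thm3.1}) between the ten even theta characteristics of $C$ and the ten unordered partitions of the six Weierstrass points into two subsets of size three: to the partition $\{A,B\}$ one associates $\kappa = \cO_C(\omega_i+\omega_j-\omega_k)$ for $\{\omega_i,\omega_j,\omega_k\}=A$, and different choices of the minus-point give the same line bundle because $K_C \sim 2\omega$ for any Weierstrass point $\omega$. This bijection immediately shows that $\rho^*\kappa \cong \kappa$ is equivalent to the partition $\{A,B\}$ being $\rho$-invariant, since $\rho^*\kappa$ is the theta characteristic associated to $\{\rho(A),\rho(B)\}$. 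I would then count the $\rho$-invariant partitions: a set of three Weierstrass points cannot be a union of $\rho$-orbits (three is odd and all orbits have size two), so an invariant partition must satisfy $\rho(A)=B$; such an $A$ is determined by choosing one representative from each of the three orbits, and modding out by the $A \leftrightarrow B$ symmetry yields exactly $2^3/2 = 4$ invariant partitions. The remaining six even theta characteristics fall into three $\rho$-orbits of size two.

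With this at hand I would set $\widetilde{E}$ (respectively $\widetilde{E}'$) to be the closure in $\overline{S}_2^+$ of the locus of $(C,\kappa) \in \pi^{-1}(E)$ with $\rho^*\kappa \cong \kappa$ (respectively $\rho^*\kappa \not\cong \kappa$); this gives the asserted decomposition $\pi^*(E) = \widetilde{E} \cup \widetilde{E}'$, and the count above shows that $\pi|_{\widetilde{E}} : \widetilde{E} \to E$ is finite of degree $4$. Finally, a bielliptic involution of $C$ lifts to an automorphism of the pair $(C,\kappa)$ precisely when $\rho^*\kappa\cong\kappa$, so $\widetilde{E}$ coincides with the bielliptic locus of $\overline{S}_2^+$.

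The main delicate step is the equivalence between $\rho^*\kappa\cong\kappa$ and $\rho$-invariance of the associated partition; once this is verified, the orbit count becomes entirely combinatorial and yields the degree 4 statement. A secondary care-point is to check that the four invariant partitions yield four genuinely non-isomorphic spin structures on $C$, which is immediate from the partition-to-theta-characteristic bijection being a bijection.
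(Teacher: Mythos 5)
Your proof is correct and follows essentially the same route as the paper: both represent the even theta characteristics by triples of Weierstrass points (equivalently, partitions of the six points $\pm x_1,\pm x_2,\pm x_3$ into two triples) and count those fixed by $\rho$, obtaining exactly $4$. The only difference is cosmetic — you count the $\rho$-invariant partitions combinatorially via the three orbits $\{x_i,-x_i\}$ (getting $2^3/2=4$), while the paper simply lists the four fixed theta characteristics and the six non-fixed ones explicitly.
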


\begin{proof}
Given a bielliptic curve $[C] \in E$  with equation \eqref{eq6.1} and Weierstrass points 
$\pm x_1, \pm x_2, \pm x_3$, it suffices  to check that 
there are exactly 4 even theta characteristics on $C$ which are invariant under the action of 
$\rho$.

We set  $x_i':= -x_i $ for $i=1,2,3$. The involution $\rho$ acts on the Weierstrass points by $x_i \mapsto x_i' $.  
The  even theta characteristics on $C$ are of the form 
$$
p_1+p_2 - p_3 \sim p_4+p_5-p_6
$$
where $ \{ p_1 , \dots , p_6 \}$  is the set of Weierstrass points. Using this equivalence is easy to verify that 
exactly the following theta characteristics are fixed by $\rho$:
 $$
x_1 + x_2 - x_3 ,   \quad  x_1+x_2 - x_3', \quad  x_1+ x_2' - x_3   \quad  x_1'+x_2-x_3.
$$
Thus the action of $\rho$ decomposes $\pi^*E$ into to divisors: $\widetilde {E}$ where $\rho$
fixes the theta characteristics and $\widetilde {E}'$ where $\rho$ permutes non-trivially  the 
remaining 6 theta 
characteristics, which are
$$ 
   x_i+x_i' - x_j, \quad  1\leq i,j \leq 3, \ i\neq j.
$$
\end{proof}

Recall that $\overline{M}_2 \setminus M_2 = \Delta_0 \cup \Delta_1$, where $\Delta_0$  (respectively 
$\Delta_1$) is the closure of the locus of irreducible nodal curves  (respectively 
of the reducible curves) of genus 2.

\begin{lem} \label{ram}
The forgetful map  $\pi: \overline{S}_2^+ \ra \overline{M}_2 $ is simply ramified on $B_0$
and unramified everywhere else in codimension $1$.
\end{lem}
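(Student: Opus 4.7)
The plan is to verify at a generic point of each boundary divisor of $\overline{S}_2^+$ whether the forgetful map $\pi$ is ramified, by analyzing local coordinates on the versal deformation spaces of generalized spin curves. Over the interior $S_2^+\to M_2$ the map is a finite \'etale cover of degree $10$, so the question is purely one about the boundary components $A_0$, $B_0$, $A_1$, $B_1$.

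At a generic point of $A_0$, we have $X=C$ irreducible with one node and the spin structure $(\kappa,\beta)$ is obtained by pulling back to the normalization and gluing in the unique way producing even parity. The whole datum extends uniquely along the smoothing of the node, so the versal deformation of $(X,\kappa,\beta)$ maps isomorphically onto the versal deformation of $C$, and $\pi$ is \'etale at $A_0$. At a generic point of $A_1$ or $B_1$, $X=X_1\cup_{y_1}E\cup_{y_2}X_2$ with $E\simeq\PP^1$ exceptional connecting two \emph{distinct} elliptic curves. The condition $\kappa_E=\cO_E(1)$ forces the two nodes of $E$ to smooth simultaneously with a single parameter $s$; since $y_1$ and $y_2$ lie on different components, the $\mathbb G_m$-automorphism of $E$ fixing $y_1,y_2$ can be used to rescale the gluing data on one side, so the smoothing parameter $t$ of the unique node of the stable model $C=X_1\cup X_2$ is $t=s$. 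Hence $\pi$ is \'etale at generic points of $A_1$ and $B_1$ as well.

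At a generic point of $B_0$, $X=\widetilde{C}\cup_{y_1,y_2}E$ with $E$ exceptional, but now both endpoints of $E$ attach to the \emph{same} curve $\widetilde{C}$. The spin deformation parameter $s$ again smooths both nodes of $E$ simultaneously, but the rescaling trick is no longer available: the residual freedom left after normalizing the gluing of $\kappa$ across both nodes of $E$ is only a sign. A direct local computation (Cornalba's model, specialised to this genus $2$ situation) then gives the relation $t=s^2$ between the spin smoothing parameter $s$ and the stable smoothing parameter $t$ of the node of $C$. This shows that $\pi$ is simply ramified along $B_0$.

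The main technical point is the deformation-theoretic verification of the two relations $t=s$ for $A_1,B_1$ and $t=s^2$ for $B_0$; everything else is routine. These relations are equivalent to the boundary pullback identities $\pi^*\Delta_0=A_0+2B_0$ and $\pi^*\Delta_1=A_1+B_1$ in $\Pic_{\QQ}(\overline{S}_2^+)$, from which the stated ramification behaviour follows at once.
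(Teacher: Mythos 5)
Your route is genuinely different from the paper's: you argue locally, via Cornalba-style deformation theory at a generic point of each boundary divisor, whereas the paper argues globally by counting fibres. Your conclusions at every divisor are correct, and the dichotomy you identify (exceptional component attached to one connected curve at both ends versus joining two distinct components) is exactly the right one. But as written the proposal has a gap precisely at the step that carries all the content of the lemma: the relations $t=s$ along $A_1,B_1$ and $t=s^2$ along $B_0$ are asserted, not derived. The justification offered for the unramified case is not rigorous as stated: the full $\mathbb G_m$ of automorphisms of $E$ fixing $y_1,y_2$ does \emph{not} act on the spin curve (otherwise $\overline{\cS}_2^+$ would fail to be Deligne--Mumford); only a finite group of inessential automorphisms does, and the actual mechanism is whether its order-two generator acts by $-1$ or trivially on the smoothing parameter $\tau$ of the blown-up node in Cornalba's local model $t=\tau^2$. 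That is a genuinely delicate point --- note that $A_1$ and $B_1$ also carry exceptional components, so ``there is an exceptional component'' cannot by itself explain the ramification along $B_0$ --- and ``a direct local computation then gives $t=s^2$'' defers it entirely. You flag this yourself, so the proposal is an outline of a viable proof rather than a proof.

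The paper avoids all of this with an elementary degree count, which you could also use to close your gap since, as you note, the local relations are equivalent to the identities $\pi^*\Delta_0=A_0+2B_0$ and $\pi^*\Delta_1=A_1+B_1$. Over a general $[C]\in\Delta_0$ with elliptic normalization $\widetilde C$ and preimages $y_1,y_2$ of the node, the fibre meets $A_0$ in the $4$ square roots of $\cO_{\widetilde C}(y_1+y_2)$ (each with its unique even gluing) and meets $B_0$ in the $3$ even theta characteristics of $\widetilde C$; since $\deg\pi=10$, the multiplicities $a,b\geq 1$ in $\pi^*\Delta_0=aA_0+bB_0$ satisfy $4a+3b=10$, forcing $(a,b)=(1,2)$. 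Over a general point of $\Delta_1$ one counts $3\cdot 3=9$ points in $A_1$ and $1$ point in $B_1$, so $9+1=10$ forces both multiplicities to be $1$. This settles every case at once with no local analysis. (Both your argument and the paper's treat the interior as \'etale of degree $10$; strictly speaking, at the level of coarse spaces the locus of curves with extra automorphisms deserves a word, but the paper handles that separately via the stack map $\tilde q$, so you are not at a disadvantage there.)
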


\begin{proof}
Clearly $\pi$ is \'etale of degree 10 over the smooth locus of $M_2$. We have 
$\pi^{-1}(\Delta_0) =A_0 \cup B_0$.
From the construction of the divisor $A_0$ we obtain that $\deg (\pi|_{A_0}) = 4$ (since an 
elliptic curve admits exactly 4 theta characteristics). 
Over a semistable curve $\tilde{C} \cup_{y_1,y_2} R$
with $g(\tilde{C})=1$ and $R$  a rational component
there are exactly 3 even theta characteristics (since an elliptic curve admits exactly 3 even theta characteristics). 
From $\deg (\pi|_ {B_0}) = 6$ we deduce that $\pi$ is simply ramified in $B_0$. One verifies that $\pi$ is \'etale over
$\Delta_1$ since $\pi^{-1}(\Delta_1)= A_1 \cup B_1$,  $\deg (\pi|_{A_1}) =9 $  and $\pi$ maps $B_1$ isomorphically
onto $\Delta_1$.
\end{proof}

We use the previous lemmas to compute the class of $K_{\overline{\cS}_2^+}$.

\begin{lem} \label{canon}
\begin{equation*} 
K_{\overline{\cS}_2^+}= \tilde{q}^*(K_ {\overline{S}_2^+}+ \frac{1}{2} \left( A_1 + B_1 \right) + \frac{1}{2} \widetilde {E}).
\end{equation*}
where $\widetilde{E} $ denotes the spin bielliptic locus  of  $\overline{S}_2^+$. 
\end{lem}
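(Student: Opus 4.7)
The plan is to apply the standard comparison formula for the canonical divisor of a smooth Deligne-Mumford stack in terms of that of its coarse moduli space. Concretely, if $\tilde{q}: \mathcal{X} \to X$ is the coarse moduli map of a smooth DM stack, and along an irreducible divisor $D\subset X$ the generic parametrized object admits an extension of order $n_D$ of the generic stabilizer acting non-trivially on the normal direction to $D$ in $X$, then
$$
K_\mathcal{X} \;=\; \tilde{q}^*\!\left(K_X + \sum_D \frac{n_D-1}{n_D}\,D\right).
$$
On $\overline{\cS}_2^+$ the generic stabilizer is generated by the hyperelliptic involution $\iota$, which acts trivially on deformations of a smooth genus $2$ spin curve and therefore contributes nothing.

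I would then enumerate the divisors carrying an extra automorphism of order two. Along $\widetilde{E}$ the bielliptic involution $\rho$ preserves the theta characteristic by definition (Lemma~\ref{biellocus}), is independent of $\iota$, and has $\widetilde{E}$ as its moduli fixed locus, so it acts as $-1$ on the normal direction; this yields a $\frac{1}{2}\widetilde{E}$ contribution. Along $A_1$ and $B_1$ the underlying curve of a generic object is $X_1\cup_y X_2$ with both $X_i$ elliptic, and each elliptic involution $\iota_{X_i}$ centered at the node $y$ preserves the $2$-torsion theta characteristic $\kappa_{X_i}$, since $\iota_{X_i}^* L = L^{-1} = L$ whenever $L^{\otimes 2}\cong \mathcal{O}$, and extends to the quasi-stable model. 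The full automorphism group is therefore $\ZZ/2\times\ZZ/2$ with the hyperelliptic involution as its diagonal, and a supplementary generator such as $\iota_{X_1}$ acts on the local coordinate $t=xu$ at the node by $t\mapsto -t$, yielding the $\frac{1}{2}(A_1+B_1)$ contribution.

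It remains to verify that neither $A_0$ nor $B_0$ contributes. On $A_0$ the generic object is an irreducible one-nodal spin curve, whose only automorphism is $\iota$, so $n_{A_0}=1$. On $B_0$ the candidate extra involution $\sigma$ of the exceptional rational component fixing the two attaching points $y_1,y_2$ must be shown not to produce a divisorial correction: the sheaf homomorphism $\beta:\kappa^{\otimes 2}\to\omega_X$ together with the spin-constrained linking of the smoothing parameters at $y_1,y_2$ forces $\sigma$ either to be incompatible with $(X,\kappa,\beta)$ as an automorphism or to act trivially in codimension one on the normal to $B_0$. Summing the three surviving contributions yields the claimed identity.

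The main technical point is the last verification on $B_0$: it requires a careful local deformation-theoretic analysis of the quasi-stable spin structure at the pair of nodes $y_1,y_2$, specifically of how the condition that $\kappa$ deform to a theta characteristic ties the two smoothing parameters together, so as to rule out a spurious $\frac{1}{2}B_0$ term.
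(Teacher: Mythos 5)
Your proposal follows the same route as the paper's proof: both reduce the lemma to identifying the codimension-one locus in $\overline{S}_2^+$ where the automorphism group of the spin curve strictly exceeds $\langle\iota\rangle$ and acts on the normal direction as a quasi-reflection, and both arrive at the three contributions $\frac{1}{2}\widetilde{E}$, $\frac{1}{2}A_1$, $\frac{1}{2}B_1$. The paper is terser: it quotes Hassett's determination of the extra-automorphism locus of $\overline{M}_2$ (the bielliptic locus together with $\Delta_1$), uses Lemma \ref{biellocus} to cut $\pi^*E$ down to $\widetilde{E}$, and checks that the automorphism group over $\Delta_1$ has order $4$; it never mentions $A_0$ or $B_0$, implicitly assuming that a spin curve whose stable model has only the hyperelliptic involution has no further automorphisms relevant to $\tilde q$.

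The step you leave open --- ruling out a $\frac{1}{2}B_0$ term coming from the involution of the exceptional component --- is therefore the one genuine gap, and you are right that it cannot simply be waved away, since over $B_0$ the quasi-stable curve really does carry an inessential involution. You do not, however, need a fresh local deformation analysis: the paper already contains the fact that closes it. By Cornalba's description, the universal deformation of a spin curve in $\beta_0$ carries a coordinate $\tau$ with $\tau^2=t$, where $t$ is the smoothing parameter of the node of the stable model. If some automorphism of the spin curve acted as $-1$ on $\tau$, the coarse space would locally be $\Spec k[\tau^2,\dots]=\Spec k[t,\dots]$ and the forgetful map $\pi$ would be unramified along $B_0$; the fibre of $\pi$ over a general point of $\Delta_0$ would then contribute total degree $4+3=7\neq 10$. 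The count $10=4\cdot 1+3\cdot 2$ of Lemma \ref{ram} thus forces every automorphism, in particular the inessential one, to act trivially on the normal direction, so $B_0$ drops out. One caveat on $A_1$ and $B_1$: there too the node is blown up in the quasi-stable model, so your computation of the action of $\iota_{X_1}$ on the stable model's parameter $t=xu$ does not by itself describe the action on the spin curve's own deformation parameter; the same kind of consistency check against Lemma \ref{ram} (where $\pi$ is \'etale along $A_1\cup B_1$) is needed to justify that the relevant cyclic group acting on the normal direction has order $2$, which is what produces the coefficient $\frac{1}{2}$ rather than something larger. This point is glossed over in the paper as well.
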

\begin{proof}
The map  $\tilde{q}$ is ramified along the locus of points in $\overline{S}_2^+ $ which admits an automorphism
group bigger than $\langle \iota \rangle$ with $\iota$ the hyperelliptic involution. It suffices to compute the codimension-one
components of this locus. 
In \cite{h} the locus of such curves have been computed for $\overline{M}_2$. It consists of the locus of bielliptic
curves plus  the boundary divisor $\Delta_1$. One immediately checks that the automorphism 
group of a general spin curve in  $\pi^{-1}(\Delta_1) = A_1 \cup B_1$
 is abelian of order 4. This together with the Lemma \ref{biellocus} proves the lemma.  
\end{proof}

From the proof we also get that the boundary class $\delta$ of the stack $\overline{\cS}_2^+$
is given by
\begin{equation} \label{eq6.2}
\delta = \tilde {q}^*(A_0 + B_0 + \frac{1}{2}A_1 + \frac{1}{2} B_1).
\end{equation}
So we get as an immediate consequence

\begin{cor} \label{ramform}
{\em (Ramification formula for $\tilde{q})$}: For every $\epsilon \in \QQ$,
\begin{equation*} 
K_{\overline{\cS}_2^+} + \epsilon \delta = \tilde{q}^* \left(K_ {\overline{S}_2^+}+
\epsilon (A_0 + B_0) +  \frac{\epsilon + 1}{2} \left( A_1 + B_1 \right) + \frac{1}{2} \widetilde {E} \right).
\end{equation*}
\end{cor}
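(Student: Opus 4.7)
The plan is to obtain the formula as a direct consequence of the two ingredients just established: the expression for $K_{\overline{\cS}_2^+}$ as a $\tilde{q}$-pullback given by Lemma~\ref{canon}, and the expression \eqref{eq6.2} for the boundary class $\delta$ as a $\tilde{q}$-pullback. Since pullback is $\QQ$-linear on rational Picard groups, it suffices to add $\epsilon$ times the second identity to the first and to collect the coefficients of $A_1 + B_1$.

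Concretely, I would first recall from Lemma~\ref{canon} the identity
$$
K_{\overline{\cS}_2^+} = \tilde{q}^*\bigl(K_{\overline{S}_2^+} + \tfrac{1}{2}(A_1+B_1) + \tfrac{1}{2}\widetilde{E}\bigr),
$$
and then multiply \eqref{eq6.2} by $\epsilon$ to obtain
$$
\epsilon\,\delta \;=\; \tilde{q}^*\bigl(\epsilon(A_0+B_0) + \tfrac{\epsilon}{2}A_1 + \tfrac{\epsilon}{2}B_1\bigr).
$$
Adding these two relations and combining the $A_1$- and $B_1$-terms into a common coefficient $\tfrac{1+\epsilon}{2}$ gives the displayed formula of the corollary.

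The computation is essentially bookkeeping and I expect no real obstacle, since both ingredients have just been proved. The only conceptual point that deserves a sentence of comment is the appearance of the factor $\tfrac{1}{2}$ in front of $A_1 + B_1$ in \eqref{eq6.2}: as observed in the proof of Lemma~\ref{canon}, a general spin curve lying over $A_1$ or $B_1$ has automorphism group of order $4$ rather than $2$, so $\tilde{q}$ is stackily ramified along these divisors and the pullback of the corresponding boundary components of $\overline{S}_2^+$ to the stack carries that extra factor of $\tfrac{1}{2}$. Once this is granted, the corollary follows by the one-line addition above, and the formula is valid for every $\epsilon \in \QQ$ by $\QQ$-linearity.
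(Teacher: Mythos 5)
Your proposal is correct and matches the paper's own (implicit) argument: the corollary is stated there as an immediate consequence of Lemma~\ref{canon} and the identity \eqref{eq6.2}, obtained exactly by adding $\epsilon$ times the latter to the former and collecting the coefficient $\tfrac{\epsilon+1}{2}$ on $A_1+B_1$. Your added remark on the origin of the $\tfrac{1}{2}$ in \eqref{eq6.2} (order-$4$ automorphism groups over $A_1\cup B_1$) is also consistent with the paper's discussion.
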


\begin{prop} \label{ample}
 The divisor $K_{\overline{\cS}_2^+} + \epsilon \delta $ is nef (respectively ample) if
and only if  $\epsilon \geq \frac{57}{25}$ (respectively the inequality is strict).
\end{prop}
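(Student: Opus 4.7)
By Corollary \ref{ramform}, the divisor in question is the pullback under the finite map $\tilde q$ of
$$
D(\epsilon) := K_{\overline{S}_2^+} + \epsilon(A_0+B_0) + \tfrac{\epsilon+1}{2}(A_1+B_1) + \tfrac{1}{2}\widetilde E
$$
on the coarse space $\overline{S}_2^+$. Since $\tilde q$ is finite and surjective, $K_{\overline{\cS}_2^+}+\epsilon\delta$ is nef (resp.\ ample) exactly when $D(\epsilon)$ is. My plan is therefore to express $D(\epsilon)$ in the basis $\{A_0,B_0,B_1\}$ of $\Pic_\QQ(\overline{S}_2^+)$ using the relation $A_1 = 3A_0-2B_0+9B_1$ from \eqref{eq4.5}, then apply Corollary \ref{cor4.5} and solve the resulting four linear inequalities in $\epsilon$.

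The first input is the class of $K_{\overline{S}_2^+}$. Using Lemma \ref{ram}, the forgetful map $\pi\colon \overline{S}_2^+\to\overline{M}_2$ is simply ramified along $B_0$ and \'etale elsewhere in codimension one, so Riemann--Hurwitz gives $K_{\overline{S}_2^+}=\pi^*K_{\overline{M}_2}+B_0$. Combining Hassett's formula for $K_{\overline{M}_2}$ (in terms of $\Delta_0,\Delta_1$; see \cite[\S3]{h}) with the pullback identities $\pi^*\Delta_0 = A_0+2B_0$ (ramification multiplicity $2$ on $B_0$) and $\pi^*\Delta_1 = A_1+B_1$ (from $\deg(\pi|_{A_1})=9$, $\deg(\pi|_{B_1})=1$) yields $K_{\overline{S}_2^+}$ as an explicit combination of $A_0,B_0,A_1,B_1$.

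The second, and harder, input is the class of the spin bielliptic locus $\widetilde E$. Here I would use Lemma \ref{biellocus}: since $\pi^*E = \widetilde E \cup \widetilde E'$ with $\widetilde E\to E$ of degree $4$ and $\widetilde E'\to E$ of degree $6$, one has to split $\pi^*E$ between these two components, correcting by suitable boundary contributions. For this, I would start from the known class $E=3\lambda-\delta_1$ (or the standard expression in $\Delta_0,\Delta_1$) on $\overline M_2$, pull it back to $\overline{S}_2^+$ using the same pullback identities as above, and determine $\widetilde E$ by analyzing the generic behavior of bielliptic involutions on spin structures at each boundary component (which dictates the multiplicities with which $A_0,B_0,A_1,B_1$ appear in $\widetilde E$ versus $\widetilde E'$). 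This computation is the main technical obstacle, since it requires careful local analysis of spin structures on degenerate bielliptic curves.

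With $K_{\overline{S}_2^+}$ and $\widetilde E$ in hand, expanding in $\{A_0,B_0,B_1\}$ yields $D(\epsilon) = a(\epsilon)A_0 + b(\epsilon)B_0 + c(\epsilon)B_1$ with $a,b,c$ affine in $\epsilon$. Corollary \ref{cor4.5} gives four linear inequalities:
$$
b(\epsilon)\le 0,\qquad a(\epsilon)\ge \tfrac{1}{3}c(\epsilon),\qquad a(\epsilon)\ge -2b(\epsilon),\qquad a(\epsilon)\le \tfrac{1}{2}(b(\epsilon)+c(\epsilon)),
$$
each of which becomes a linear inequality in $\epsilon$. The threshold $\epsilon=\frac{57}{25}$ will come out as the maximum of the lower bounds imposed by these four conditions; equivalently, at $\epsilon=\frac{57}{25}$ exactly one of the $F$-curves $\Gamma_1,\dots,\Gamma_5$ will have intersection number zero with $D(\epsilon)$, and for $\epsilon$ slightly smaller this intersection becomes negative, producing the sharp threshold. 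For strict inequalities the ample conclusion follows identically.
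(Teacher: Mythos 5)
Your framework is exactly the paper's: pull back along $\tilde q$ via Corollary \ref{ramform}, compute $K_{\overline{S}_2^+}=\pi^*K_{\overline{M}_2}+B_0$ from Lemma \ref{ram} and Hassett's formula $K_{\overline{M}_2}\equiv -\frac{11}{5}\Delta_0-\frac{16}{5}\Delta_1$ with $\pi^*\Delta_0=A_0+2B_0$, $\pi^*\Delta_1=A_1+B_1$, determine the class of $\widetilde E$, and then feed the resulting divisor into the nef-cone inequalities. (Whether you use Corollary \ref{cor4.5} in the basis $\{A_0,B_0,B_1\}$ or Proposition \ref{prop3.4} in the four-class presentation, as the paper does, is immaterial.) The paper's computation gives $F=(\epsilon-\frac{8}{5})A_0+(\epsilon-\frac{11}{5})B_0+(\frac{\epsilon}{2}-\frac{3}{10})(A_1+B_1)$, and the binding inequality is $a+2b\geq c$, i.e.\ $\epsilon\geq\frac{57}{25}$, the other active one being $3a\geq d$, i.e.\ $\epsilon\geq\frac{9}{5}$, which is weaker.

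The genuine gap is that you never produce the class of $\widetilde E$, and without it none of the affine functions $a(\epsilon),b(\epsilon),c(\epsilon)$ can be written down, so the threshold $\frac{57}{25}$ cannot be derived or even checked. You correctly identify this as the delicate input, but your plan --- splitting $\pi^*E=\widetilde E\cup\widetilde E'$ by a local analysis of bielliptic involutions on degenerate spin curves at each boundary component --- is left entirely unexecuted, and it is precisely this step that carries all the numerical content of the proposition. The paper dispatches it in one line: since $\pi$ is \'etale of degree $10$ over $M_2$ and $\widetilde E\to E$ has degree $4$ by Lemma \ref{biellocus}, it takes $\widetilde E=\frac{4}{10}\pi^*(E)=\frac{4}{10}\pi^*(3\Delta_0+12\Delta_1)=\frac{6}{5}A_0+\frac{12}{5}B_0+\frac{24}{5}A_1+\frac{24}{5}B_1$, i.e.\ it simply scales the full pullback by the ratio of degrees rather than carrying out the component-by-component boundary analysis you envisage. (Two smaller points: the class you quote for the bielliptic locus, $3\lambda-\delta_1$, is not the one the paper uses --- it cites $E\equiv 3\Delta_0+12\Delta_1$ from Hassett --- and you should be aware that the proportionality $\widetilde E=\frac{4}{10}\pi^*E$ is itself an assertion about how the two components $\widetilde E,\widetilde E'$ sit in $\Pic_{\QQ}$, so your instinct that something needs to be justified here is sound; but as a proof of the stated proposition your write-up stops before the decisive computation.)
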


\begin{proof}
From \cite[Section 4.2 and Proposition 3.6]{h} we have 
$$
K_{\overline{M}_2} \equiv -\frac{11}{5} \Delta_0 - \frac{16}{5}\Delta_1, \quad  E \equiv 3 \Delta_0 + 12 \Delta_1.  
$$
Since by Lemma \ref{ram} we have
$\pi^*\Delta_0 = A_0 + 2B_0$ and $\pi^*\Delta_1 = A_1 + B_1$, we get
\begin{eqnarray*}
K_{\overline{\cS}_2^+} &=& \pi^*(-\frac{11}{5} \Delta_0 - \frac{16}{5}\Delta_1) + B_0\\
& = & - \frac{11}{5} A_0 - \frac{17}{5} B_0 - \frac{16}{5} A_1 - \frac{16}{5} B_1.
\end{eqnarray*}
Moreover by Lemma \ref{biellocus} and the fact that $\pi$ is \'etale of degree 10 over $M_2$ we get
\begin{eqnarray*}
\widetilde {E} & = & \frac{4}{10} \pi^*(E) \\
& = & \frac{4}{10}( \pi^*(3\Delta_0 + 12 \Delta_1)) =  \frac{6}{5}A_0 +
\frac{12}{5}B_0 + \frac{24}{5}A_1 + \frac{24}{5} B_1 
\end{eqnarray*}
So Corollary \ref{ramform} implies 
$K_{\overline{\cS_2^+}} + \epsilon \delta = \tilde{q}^* F $ with 
\begin{equation} \label{logcandiv}
F= (\epsilon - \frac{8}{5}) A_0 + (\epsilon - \frac{11}{5}) B_0 +  (\frac{\epsilon}{2} - \frac{3}{10}) A_1 + (\frac{\epsilon}{2} - \frac{3}{10}) B_1. 
\end{equation}

The inequalities of Proposition 5.3 applied to $F$ give the following conditions for the class
$F$ to be ample:
$$
\epsilon > \frac{9}{5}, \quad  \epsilon > \frac{57}{25} 
$$
(the last two inequalities do not impose conditions on $\epsilon$).
\end{proof}

Let us recall the definition of a log canonical model. Let $X$ be a normal projective variety and 
$D= \sum_{i=1}^n a_iD_i $ a $\QQ$-divisor such that $K_X + D$ is $\QQ$-Cartier and $0\leq a_i \leq 1$.

\begin{defn}
 The pair $(X,D)$ is a {\it  strict log canonical model} if $K_X + D$ is ample, $(X,D)$ has log canonical singularities,
and $X \setminus \cup_i D_i$ has canonical singularities. 
 
\end{defn}

We will also need the following proposition, whose proof we refer to 
\cite[20.2, 20.3]{ko}):
 \begin{prop}    \label{logcansing}
  Let $Y$ be a smooth variety and $f: Y \ra X$ be a finite dominant morphism to a normal variety $X$. 
Let $D = \sum_{i=1} a_iD_i $, $0\leq a_i \leq 1$ be a $\QQ$-divisor containing all the divisorial components 
of the branch locus of $f$. Let $\bar{D}$ be a $\QQ$-divisor on $Y$ such that $\supp (f^{-1}(D)) = \supp(\bar{D})$
and $f^*(K_X + D) = K_Y + \bar{D} $. Then $(X,D)$ has log canonical singularities along $D$ if and only if 
$(Y, \bar{D})$ has log canonical singularities along $\bar{D}$. 
 \end{prop}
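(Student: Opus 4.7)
My plan is to compare the log discrepancies of the two pairs on a common log resolution compatible with the finite map $f$. Since $Y$ is already smooth, I start by taking a log resolution $q: \tilde{Y} \to Y$ of the pair $(Y, \bar{D})$; I would then construct a log resolution $p: \tilde{X} \to X$ of $(X, D)$ and a finite dominant morphism $\tilde{f}: \tilde{Y} \to \tilde{X}$ lifting $f$, so that $f \circ q = p \circ \tilde{f}$. Concretely, one starts with any log resolution of $(X, D)$, forms the normalisation of the fibre product $\tilde{X} \times_X Y$, and blows up further so that the resulting finite cover of $\tilde{X}$ is smooth and its ramification divisor together with the pre-image of $\bar{D}$ is in simple normal crossings. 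Reducing first to the Galois case by replacing $f$ with its Galois closure and using equivariant resolutions keeps the bookkeeping clean.

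The core of the argument is a Riemann--Hurwitz computation. Write the log-discrepancy expansions
$$
K_{\tilde{X}} = p^{*}(K_X + D) - (p^{-1})_{*} D - \sum_i (1 - a_i) E_i, \qquad K_{\tilde{Y}} = q^{*}(K_Y + \bar{D}) - (q^{-1})_{*} \bar{D} - \sum_j (1 - b_j) F_j,
$$
where $E_i, F_j$ are the exceptional prime divisors on $\tilde{X}, \tilde{Y}$ and $a_i, b_j$ their log discrepancies; the pairs are log canonical iff every $a_i \geq 0$, respectively every $b_j \geq 0$. I would pull back the first identity by $\tilde{f}$, substitute $\tilde{f}^{*} K_{\tilde{X}} = K_{\tilde{Y}} - R_{\tilde{f}}$ with $R_{\tilde{f}} = \sum_j (e_j - 1) F_j$ (where $e_j$ is the ramification index of $\tilde f$ along $F_j$), and use the hypothesis to rewrite $\tilde{f}^{*} p^{*}(K_X + D) = q^{*} f^{*}(K_X + D) = q^{*}(K_Y + \bar{D})$. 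Equating coefficients at each $F_j$ dominating an exceptional $E_i$ gives the clean identity $b_j = e_j\, a_i$. Since $e_j \geq 1$, this forces $a_i \geq 0 \iff b_j \geq 0$, and the correspondence $F_j \mapsto \tilde{f}(F_j)$ exhausts all exceptional primes relevant to the log canonical condition, yielding the desired equivalence.

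The main obstacle is the careful bookkeeping of ramification, which is precisely where the standing hypothesis on $D$ enters. One must verify that every prime divisor on $\tilde{Y}$ contributing to the ramification of $\tilde{f}$ is tracked by one of the $F_j$ or a component of $(q^{-1})_{*} \bar{D}$, and that its image on $\tilde{X}$ lies in $(p^{-1})_{*} D \cup \bigcup_i E_i$; the assumption that $D$ contains every divisorial branch component of $f$ guarantees that no untracked codimension-one ramification appears after base change. A secondary verification is to repeat the coefficient comparison on the strict transforms of the $D_i$: a component $F$ of $\tilde{f}^{-1}((p^{-1})_{*} D_i)$ with ramification index $e$ receives coefficient $1 - e(1 - a_i)$ in $\bar{D}$, which is automatically $\leq 1$ since $e(1 - a_i) \geq 0$, so that $(Y, \bar{D})$ is compatible with the log canonical framework.
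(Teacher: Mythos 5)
Your argument is correct and is essentially the standard proof of this statement: the paper itself gives no proof but simply refers to \cite[20.2, 20.3]{ko}, and the Riemann--Hurwitz comparison of log discrepancies $b_j = e_j\,a_i$ (with $e_j \geq 1$, so the signs of the discrepancies over $X$ and over $Y$ agree) that you carry out is exactly the content of that reference. The one technical wrinkle is that you cannot in general arrange the normalized fibre product to be simultaneously \emph{smooth} and \emph{finite} over the log resolution $\tilde{X}$ by further blow-ups; but this is harmless, since the coefficient comparison is a codimension-one computation at the generic points of the divisors $F_j$, where the normal variety $Y' = \widetilde{(\tilde{X} \times_X Y)}$ is already regular, so the identity $b_j = e_j\,a_i$ and hence the equivalence go through unchanged.
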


\begin{proof}[Proof of Theorem 2(1)]
Proposition \ref{ample} shows the required ampleness of the divisor $F$. In order to verify the singularity
conditions of $\overline{S}^+_2$  we will use the results in \cite[Theorem 4.10]{h}. Consider
the finite map $\pi : \overline{S}^+_2 \ra \overline{M}_2$.  We have that $\overline{M}_2$ has canonical 
singularities away from $\Delta_0$ and $\Delta_1$ (\cite{h}). Since $\pi_{|_{S_2^+}}$ is \'etale, it follows
that   $\overline{S}^+_2$ has canonical singularities away from $\pi^{-1}(\Delta_0 \cup \Delta_1) = A_0 \cup B_0 
\cup A_1 \cup B_1$. In particular it has canonical singularities away from the boundary divisors
 $A_0 , B_0, A_1,  B_1$ and $ \widetilde{E}$. 

Since $\overline{M}_2$ has log canonical singularities along
$\Delta_0, \Delta_1$ and $E$ (\cite{h}), by Proposition \ref{logcansing}  $\overline{S}^+_2$ has 
log canonical singularities along  $\pi^{-1}(\Delta_0 \cup \Delta_1 \cup E)$ and hence also along 
$A_0 \cup B_0 \cup A_1 \cup B_1\cup \widetilde{E}$, since $\overline{M}_2$ is smooth at $E$ and $\pi$ is \'etale over $E$.
\end{proof}

\section{The invariant-theoretical compactification of $S_2^+$}

Let $C$ be a smooth projective curve of genus 2 over $k$.
Let $\FF_2^4$ denote the 4-dimensional vector space over $\FF_2$ equipped with a fixed 
symplectic form. The group of 2-division points $JC[2]$ of the Jacobian of $C$ is isomorphic to 
$\FF_2^4$ and admits a canonical symplectic form, the Weil form. 
 A {\it curve of genus 2 with a level-2 structure} 
consists of a pair $(C,\varphi)$ with a curve $C$ of genus 2 and a {\it level-2 structure} $\varphi$, that is
a symplectic isomorphism $\varphi: \FF_2^4 \ra JC[2]$.
Let $M_2(2)$ denote the coarse moduli space of such pairs. It can be constructed as follows:

The hyperelliptic covering $C \ra \PP^1$ is ramified exactly in the 6 Weierstrass points 
$p_1, \dots, p_6$ with images $x_1, \dots x_6 \in \PP^1$. It is well-known (see \cite{do}) 
that  a level-2 
structure on $C$ is equivalent to an order of the set of Weierstrass points of $C$. Denote by
$$
\Delta_6 = \{ (x_1, \dots, x_6) \in (\PP^1)^6 \;|\; x_i = x_j \; \mbox{for some} \; i \neq j \}
$$
the diagonal of the sixfold cartesian product $(\PP^1)^6$ of $\PP^1$. Then 
$$
M_2(2) \simeq \left[ (\PP^1)^6 \setminus \Delta_6 \right] //\SL_2(k)
$$
where $\SL_2(k)$ acts in the usual way on $\PP^1$ and diagonally on  
$(\PP^1)^6$. The forgetful map
$$
M_2(2) \ra M_2, \qquad (C,\varphi) \mapsto C
$$
onto the coarse moduli space $M_2$ of smooth curves of genus 2 corresponds to the quotient map
$$
\left[ (\PP^1)^6 \setminus \Delta_6 \right] //\SL_2(k) \ra \left\{ \left[ (\PP^1)^6 \setminus \Delta_6 \right] // \SL_2(k) \right\} /\mathfrak{S}_6
$$
 where the action of the symmetric group $\mathfrak{S}_6$ is induced by its permutation action on $(\PP^1)^6$.

Recall from \cite{m} that an even theta characteristic on $C$ is the line bundle given by a divisor 
$p_{i_1} + p_{i_2} - p_{i_3}$ where the $p_{i_j}$ are different Weierstrass points and 
$p_{j_1} + p_{j_2} - p_{j_3}$ defines the same theta characteristic if and only if either 
$\{j_1,j_2,j_3\} = \{i_1,i_2,i_3\}$ or $\{i_1,i_2,i_3,j_1,j_2,j_3\} = \{1, \dots ,6\}$. This implies
that the even theta characteristics of $C$ are in a natural bijective correspondence with the 3-element
subsets of the set $\{1, \dots , 6\}$ modulo the involution 
$\{i_1,i_2,i_3\} \mapsto \{1, \dots, 6\} \setminus \{i_1,i_2,i_3\}$. 
In order to construct the coarse moduli space $S_2^+$ of even spin curves of genus 2 
consider again the subgroup 
$$
G  := (\mathfrak{S}_3 \times \mathfrak{S}_3) \rtimes \langle \tau \rangle
$$
of $\mathfrak S_6$
as defined at the beginning of Section 5.

Clearly
the stabilizer of an even theta characteristic given by $\{i_1,i_2,i_3\}$ is conjugate to $G$. Hence
we obtain an isomorphism
\begin{equation} \label{eq2.1}
S_2^+ \stackrel{\simeq}{\lra}  M_2(2)/ G =  \left\{ \left[ (\PP^1)^6 \setminus \Delta_6 \right]//\SL_2(k) \right\} / G. 
\end{equation}
In \cite[p.17]{do} it is shown that a natural compactification of $M_2(2)$, the invariant-theoretical
compactification,  which we denote by 
$\overline {M_2(2)}^{inv}$, is isomorphic to the Segre cubic threefold.
In $\PP^5 = \PP^5(t_0, \dots, t_5)$ the Segre cubic is given by the equations 
\begin{equation} \label{e2.2}
s_1 := \sum_{i=0}^5 t_i = 0 \quad  \mbox{and} \quad s_3 := \sum_{i=0}^5 t_i^3 = 0
\end{equation}
where $\mathfrak{S}_6$ acts by permuting the coordinates. In other words,
\begin{equation} \label{eq2.2}
\overline {M_2(2)}^{inv} = \Proj \left( k[t_0,\dots,t_5]/(s_1,s_3) \right). 
\end{equation}
Together with \eqref{eq2.1} this implies the existence of a natural compactification $\overline{S_2^+}^{inv}$, 
the invariant-theoretical compactification of $S_2^+$, given by
\begin{equation}  \label{eq2.3}
\overline{S_2^+}^{inv}= \Proj\left( (k[t_0,\dots,t_5]/(s_1,s_3))^G \right).
\end{equation}
Here $(k[t_1,\dots,t_5]/(s_1,s_3))^G$ denotes the ring of invariants in
$\CC[t_0,\dots,t_5]/(s_1,s_3)$ where the first $\mathfrak{S}_3$ acts by permuting $t_0,t_1,t_2$, the second 
$\mathfrak{S}_3$ by permuting $t_3,t_4,t_5$ and $\tau$ by exchanging $t_i$ and $t_{i+3}$
for $i=0,1,2$.

The canonical map $M_2(2) \ra M_2$ factorizes as
\begin{equation*} 
\xymatrix{
M_2(2) \ar[rr]^f  \ar[dr]  & &S_2^+ \ar[dl]^g \\
& M_2 &
}
\end{equation*}
where $g$ is \'etale of degree 10. If we denote by $s_k = s_k(t_0,\dots,t_5) := \sum_{j=0}^5 t_j^k$ 
for  $k = 1,\dots,6$, the corresponding rings of invariants are
$$
k[t_0,\dots,t_5]/(s_1,s_3) \supset \left( k[t_0,\dots,t_5]/(s_1,s_3)\right)^G \supset 
\left( k[t_0,\dots,t_5]/(s_1,s_3) \right)^{\mathfrak{S}_6} = k[s_2,s_4,s_5,s_6]
$$
where the last equality holds since $\mathfrak{S}_6$ acts by permuting the $t_i$.
Taking the $\Proj$ of these rings gives the commutative diagram
\begin{equation} \label{diag2.5}
\xymatrix{
{\overline {M_2(2)}}^{inv} \ar[rr]^{\bar f}  \ar[dr]  & &{\overline{S_2^+}^{inv}} \ar[dl]^{\bar g} \\
& \overline{M}_2^{inv} &
}
\end{equation}
which compactifies the above diagram by \eqref{eq2.2} and \eqref{eq2.3}. 
Note that, since the ring 
extensions are finite, the maps $\bar f$ and $\bar g$ are everywhere defined and finite.
According to the following 
remark, $\overline{M}_2^{inv}$ is the classical invariant theoretical compactification of $M_2$.

\begin{rem}
With the above coordinates we have
$$
\overline{M}_2^{inv} = \Proj \left(  k[s_2,s_4,s_5,s_6] \right) = \PP(2,4,5,6).
$$
On the other hand, in terms of the invariant of binary sextics (see \cite{i} or \cite{h}),  
$$
\overline{M}_2^{inv} = \Proj( k[A,B,C,D]) = \PP(1,2,3,5)
$$
where $A,B,C,D$ are the classical invariants degree 2,4,6,10 respectively. 
By \cite[Proposition of Delorme]{d} there is a natural isomorphism 
$$
 \PP(2,4,5,6) \simeq \PP(1,2,3,5).
$$
\end{rem}

\begin{rem} \label{rem7.2}
The ring of invariants $(k[t_0,\dots,t_5]/(s_1,s_3))^G$  was explicitly computed by A. Clebsch 
in \cite[Section 61]{cl}. For a modern version see \cite[Section 4]{ksv}. We do not need the explicit form of the ring.
\end{rem}

\begin{rem}
The map $\bar g$ is of degree
$$
\deg \bar g = \frac{|S_6|}{|G|} = \frac{720}{72} = 10
$$
which coincides with the fact that every smooth curve of genus 2 admits exactly 10 even theta characteristics.
\end{rem}

In \cite{do} the variety $\overline{M_2(2)}^{inv}$ is interpreted as the moduli space of semistable 
ordered sets of 6 points in $\PP^1$. As such, the strictly semistable sets are given by the 10 singular 
points of $\overline{M_2(2)}^{inv}$. Here we consider $\overline{M_2(2)}^{inv}$ as the moduli 
space of semistable curves of genus 2 with level-2 structure. We want to determine the image of the boundary divisor
$\overline {M_2(2)}^{inv} \setminus {M_2(2)}$ in $\overline{S_2^+}^{inv}$.

Let $D_2(2), \; D_2^+$ and $D_2$ the boundary divisors of $\overline{M_2(2)}^{inv}, 
\overline{S_2^+}^{inv}$ and $\overline{M}_2^{inv}$, i.e. $D_2(2)$ is the closed subscheme 
$\overline{M_2(2)}^{inv} \setminus M_2(2)$ of $\overline {M_2(2)}^{inv}$, etc. The above two 
diagrams induce the following diagram of finite surjective maps
\begin{equation*} 
\xymatrix{
{D_2(2)} \ar[rr]^{\bar f}  \ar[dr]  & & {D_2^+} \ar[dl]^{\bar g} \\
& {D_2}&
}
\end{equation*}

We clearly have
$$
D_2(2) = \Delta_6 = \bigcup_{1 \leq i < j \leq 6} D_{ij} \quad \mbox{with} \quad D_{ij} =
\{ (x_1,\dots, x_6) \in (\PP^1)^6 \; | \; x_i = x_j \}.
$$
Denoting
$$
A_0^{inv} := \bar{f}(D_{12}) \quad \mbox{and} \quad B_0^{inv} := \bar{f}(D_{14}),
$$  
we have
\begin{prop} \label{prop7.4}
The divisor $D_2^+$ consists of the \emph{2} irreducible components
$$
D_2^+ = A_0^{inv} \cup B_0^{inv},
$$
where $A_0^{inv}$ \mbox{(}respectively $B_0^{inv}$\mbox{)} is the closure of 
points parametrizing irreducible spin curves with one node (repectively spin curves with one 
exceptional component and irreducible stable reduction).
\end{prop}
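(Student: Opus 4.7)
The plan is to read off both the irreducible decomposition of $D_2^+$ and the modular content from the description of $\bar f$ as the quotient morphism by $G$. Since $\overline{S_2^+}^{inv} = \Proj\bigl((k[t_0,\ldots,t_5]/(s_1,s_3))^G\bigr)$, the finite surjective map $\bar f$ is exactly the $G$-quotient of $\overline{M_2(2)}^{inv}$, and the action of $G$ permutes the $15$ divisors $D_{ij}$. Because $\bar f$ is finite and $D_2(2)$ is $G$-stable, the irreducible components of $D_2^+ = \bar f(D_2(2))$ correspond bijectively to the $G$-orbits on the set $\{D_{ij}\mid 1\le i<j\le 6\}$.

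First, I would enumerate these orbits. The subgroup $\mathfrak{S}_3\times \mathfrak{S}_3$ acts on $\{1,2,3\}$ and $\{4,5,6\}$ separately, giving three orbits on pairs: those contained in $\{1,2,3\}$ (represented by $\{1,2\}$), those contained in $\{4,5,6\}$ (represented by $\{4,5\}$), and the ``mixed'' pairs with one index on each side (represented by $\{1,4\}$). The involution $\tau=(14)(25)(36)$ exchanges the first two and preserves the third. Hence $G$ has precisely two orbits, with representatives $\{1,2\}$ (size $6$) and $\{1,4\}$ (size $9$). This yields $D_2^+ = A_0^{inv}\cup B_0^{inv}$ with $A_0^{inv} = \bar f(D_{12})$ and $B_0^{inv} = \bar f(D_{14})$, both irreducible and distinct (the two orbits are disjoint because $G$ preserves the partition of $\{1,\ldots,6\}$ underlying the even theta characteristic).

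Next, I would identify the modular meaning of each component by analyzing a one-parameter smoothing $(C_t,\kappa_t)$ degenerating to a point of $D_{ij}$, using the recipe for $\alpha$ (and hence for $f$) described in Section 3: the partition $\{1,2,3\}\,|\,\{4,5,6\}$ encodes the theta characteristic via $\kappa = \cO_C(\omega_1+\omega_2-\omega_3)$. Under the double cover $C\to \PP^1$, the points $x_1,\ldots,x_6$ are the images of the Weierstrass points, and collision of $x_i$ and $x_j$ forces the double cover to acquire a node. The two cases to treat are:
\begin{itemize}
\item If $\{i,j\}\subset\{1,2,3\}$ (say $\{i,j\}=\{1,2\}$), both Weierstrass points that collide belong to the same block of the partition defining $\kappa$, so the limit divisor $2y_0-\omega_3$ extends to an honest line bundle on the irreducible nodal curve $C_0$ satisfying $\kappa|_{C_0}^{\otimes 2}\simeq\omega_{C_0}$, with $\beta$ the obvious isomorphism. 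By Cornalba's description this is the generic point of $A_0$, so $A_0^{inv}$ is the closure of the image of irreducible one-nodal spin curves.
\item If $\{i,j\}$ is mixed (say $\{1,4\}$), the two colliding Weierstrass points lie in opposite blocks of the partition. Writing $\kappa = \cO_C(\omega_1+\omega_2-\omega_3)\simeq\cO_C(\omega_4+\omega_5-\omega_6)$, in the limit neither representative produces a square root of $\omega_{C_0}$ on the nodal curve itself; one checks that the only Cornalba-admissible extension inserts an exceptional rational bridge $E$ at the node, with $\kappa|_E=\cO_E(1)$ and $\kappa_{\widetilde C_0}$ an even theta characteristic on the elliptic normalization. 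This is the generic point of $B_0$, whence $B_0^{inv}$ is the closure of the image of spin curves with an exceptional component over the unique node of the irreducible stable model.
\end{itemize}

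The main obstacle is the second bullet: one has to verify that the ``mixed'' collision really forces the appearance of an exceptional component rather than producing a line bundle on the nodal curve. Concretely, this amounts to checking that the line bundle limit of $\kappa_t$ along a smoothing transverse to $D_{14}$ restricts to the trivial square-root situation on the elliptic normalization and therefore, by Cornalba's compactification, must be represented on the semistable model with an exceptional $\PP^1$. Once this limit computation is in hand—either by a direct local calculation in coordinates on the versal deformation of the node, or by invoking the explicit degeneration formulas from \cite{lo2} used already in the proof of Theorem \ref{thm3.1}—the rest of the argument is purely bookkeeping of $G$-orbits.
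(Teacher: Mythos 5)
Your proposal is correct, and its first half is exactly the paper's argument: the paper also deduces the two-component decomposition from the fact that $G$ has precisely two orbits on the fifteen divisors $D_{ij}$, represented by $D_{12}$ (size $6$) and $D_{14}$ (size $9$). Where you diverge is in the geometric identification of the components: the paper disposes of this in one sentence by citing the table in \cite{k} via the identifications \eqref{eq3.1} and \eqref{eq3.2} already recorded in Section 5, whereas you re-derive the correspondence by a direct degeneration analysis. Your analysis is sound, and the step you flag as the ``main obstacle'' does go through by the local computation you indicate: for a within-block collision both colliding Weierstrass points tend to the node, so $\nu^*\kappa$ has limit $\cO_{\widetilde C}(y_1+y_2-\omega_3)$ and $(\nu^*\kappa)^{\otimes 2}\simeq \cO_{\widetilde C}(y_1+y_2)=K_{\widetilde C}(y_1+y_2)$ because $y_1+y_2$ is the hyperelliptic class of the elliptic normalization; for a mixed collision only one colliding point lies in the chosen block, the naive limit squares to $\cO_{\widetilde C}(2y_i)\not\simeq\cO_{\widetilde C}(y_1+y_2)$ for generic $y_i$, so Cornalba's compactification forces the exceptional bridge. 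A useful sanity check you could add is that this dichotomy reproduces the fibre counts $4+6=10$ over a general point of $\Delta_0$, matching $\deg(\pi|_{A_0})=4$ and $\deg(\pi|_{B_0})=6$ from Lemma \ref{ram}. In short: same skeleton as the paper, with the citation to \cite{k} replaced by a self-contained (and verifiable) limit computation.
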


\begin{proof}
The group $G$ acts on the set of components of $D_2(2)$ with 2 orbits represented by $D_{12}$
and $D_{14}$ which gives the first assertion. The geometric interpretation of the components 
follows from \cite{k} as in \eqref{eq3.1} and \eqref{eq3.2} above. This explains also the notation.
\end{proof}

According to \cite[remark after Theorem 9.4.10, p.526]{d1} the boundary divisor $D_2(2)$ of 
$\overline {M_2(2)}^{inv} = \Proj ( k[t_0, \dots, t_5)/(s_1,s_3))$ is given by the 15 planes in the 
Segre cubic with equations
$$
t_i + t_j = t_k + t_l = t_m + t_n = 0
$$
where $\{i,j,k,l,m,n\} = \{1,\dots,6\}$.
The group $G$ acts on them with 2 orbits represented by the planes
$\Pi_1$ with equations  $ t_0 + t_3 = t_1 + t_4 = t_2 + t_5 = 0$ and $\Pi_2$ with equations
$t_0 + t_1 = t_2 + t_3 = t_4 + t_5 = 0$. The orbit of $\Pi_1$ consists of 6 planes, whereas the 
orbit of $\Pi_2$ consists of 9 planes. Comparing with \eqref{eq3.1} and \eqref{eq3.2} this implies 
$$
f^*(A_0^{inv}) = \Orb_G(\Pi_1) \quad \mbox{and} \quad  f^*(B_0^{inv}) = \Orb_G(\Pi_2)
$$
or equivalently
$$
A_0^{inv} = f(\Pi_1) \quad \mbox{and} \quad B_0^{inv} = f(\Pi_2)
$$
since the natural map $\overline {M_2(2)} \ra \overline {M_2(2)}^{inv}$  is $G$-equivariant.

The Segre cubic threefold $\overline {M_2(2)}^{inv}$ contains exactly 10 singular points i.e. nodes
(see \cite[Example 2 p.31]{do}). Their coordinates are $(\pm1, \dots, \pm1)$ where exactly half of 
them are positive. Here the equation is taken in the $\mathfrak{S}_6$-invariant form \eqref{e2.2}.
Denoting 
$$
a_1^{inv} := \overline{f} (1,1,-1,1,-1,-1) \quad \mbox{and} \quad 
b_1^{inv} := \overline{f} (1,1,1,-1,-1,-1).
$$ 
we have 

\begin{prop} \label{prop7.5}
The point $a_1^{inv}$  (respectively $b_1^{inv}$) in $\overline{S_2^+}^{inv}$
represents all stable spin curves with $2$ smooth genus-$1$ components connected by one 
exceptional
component with even (respectively odd) theta characteristics on the elliptic curves. 
\end{prop}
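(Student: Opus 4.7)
The plan is to combine the $G$-orbit structure on the ten nodes of the Segre cubic with a stable-reduction computation that distinguishes the two strata $A_1$ and $B_1$.

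First I would classify the $G$-orbits on the nodes. The ten nodes correspond to the ten unordered partitions $\{S,S^c\}$ of $\{0,\dots,5\}$ into two $3$-element subsets; the subgroup $G = (\mathfrak{S}_3 \times \mathfrak{S}_3) \rtimes \langle \tau \rangle$ fixes the distinguished partition $\{\{0,1,2\},\{3,4,5\}\}$ and acts transitively on the remaining nine. Since $\bar f$ is $G$-invariant and finite, the two $G$-orbits yield $\bar f^{-1}(b_1^{inv}) = \{(1,1,1,-1,-1,-1)\}$ and $\bar f^{-1}(a_1^{inv}) = \Orb_G(1,1,-1,1,-1,-1)$.

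Next I would give the modular interpretation. A node with partition $\{S,S^c\}$ parametrizes the S-equivalence class of $6$-pointed $\PP^1$'s in which the three points indexed by $S$ collide at one location and those indexed by $S^c$ at another. Stable reduction of such a family produces a DM-stable curve $E_1 \cup_y E_2$ of genus $2$ whose components $E_i$ are elliptic (each arising, after blowing up a colliding point in the total space, as the double cover of an exceptional $\PP^1$ branched at the three $x_j$'s approaching that point, together with a point at infinity in the blown-up coordinate), and which in Cornalba's spin compactification becomes a quasi-stable spin curve $E_1 \cup_{y_1} E \cup_{y_2} E_2$ with $E$ exceptional. So the image of each node in $\overline{\cS}_2^+$ lies in $A_1 \cup B_1$.

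Finally, I would compute $\kappa|_{E_i}$. The even theta characteristic corresponding to the $G$-stabilized partition is $\kappa = \cO_C(\omega_1 + \omega_2 - \omega_3)$. Extending it through the Weierstrass sections to the smooth total family and twisting by components of the special fiber (using $\cO(E_i)|_{E_i} = \cO_{E_i}(-y_i)$, which comes from the triviality of $E_1 + E + E_2$ in the Picard of the total space) to reach Cornalba's multi-degree $(0,1,0)$, one obtains an explicit class in $\Pic^0(E_i)$. In the aligned case all three of $\omega_1,\omega_2,\omega_3$ land on $E_1$; with the group-law origin at $y_1$ they are the three non-trivial $2$-torsion elements and satisfy $\omega_1+\omega_2+\omega_3 = 0$, so the computation yields $\kappa|_{E_i} = \cO_{E_i}$ and the spin curve lies in $B_1$. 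In any misaligned case the set $\{\omega_1,\omega_2,\omega_3\}$ splits $2$-$1$ across the two elliptic components and the analogous computation produces a non-trivial $2$-torsion line bundle on each $E_i$, placing the spin curve in $A_1$. Combined with Step 1 this identifies $b_1^{inv}$ with the contracted image of $B_1$ and $a_1^{inv}$ with that of $A_1$.

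The main obstacle is the twist computation: one must work with a smooth total family (after a base change to resolve the non-normal singularity at each triple point), translate the naive extension $\cO(\omega_1+\omega_2-\omega_3)$ into Cornalba's preferred multi-degree, and correctly identify the group-law origin on each $E_i$ with the intersection node $y_i$ so that the $2$-torsion relation among the three limiting Weierstrass points translates into triviality (aligned) or non-triviality (misaligned) of $\kappa|_{E_i}$.
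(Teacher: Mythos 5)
Your argument is correct and lands on the right identification, but it takes a genuinely different route from the paper for the decisive step. The paper's proof is two lines: it computes the two $G$-orbits on the ten nodes (of sizes $9$ and $1$, exactly as in your first step) and then simply matches them against the identifications \eqref{eq3.3} and \eqref{eq3.4}, i.e.\ against Krug's table relating the boundary strata of $\overline{M}_{0,[3,3]}$ to $A_1$ and $B_1$, which was already invoked in Section 5. You instead re-derive the relevant entries of that table from scratch: you read each node as the semistable configuration in which the six marked points collide in two triples, perform stable reduction to land in $A_1\cup B_1$, and then determine $\kappa|_{E_i}$ by a component-twist computation. That computation is sound: in the aligned case the naive extension of $\cO(\omega_1+\omega_2-\omega_3)$ twisted by $\cO(E_1)$ has multidegree $(0,1,0)$ and restricts to $\cO_{E_1}(\omega_1+\omega_2-\omega_3-y_1)\cong\cO_{E_1}$ because $\omega_1,\omega_2,\omega_3$ are the three nontrivial $2$-torsion points for the origin $y_1$, giving $B_1$ (odd, $\kappa_{X_i}=\cO_{X_i}$); in the misaligned case the $2$--$1$ split forces nontrivial $2$-torsion on each elliptic component, giving $A_1$ (even, $h^0(\kappa_{X_i})=0$). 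These parities match Cornalba's definitions of $A_1$ and $B_1$ and hence the statement. What the paper's citation buys is brevity; what your version buys is self-containedness and an independent check of \eqref{eq3.3}--\eqref{eq3.4}. If you write it out in full, the only point needing care is the one you already flag: a base change and resolution of the total space so that the identities $\cO(E_i)|_{E_i}=\cO_{E_i}(-y_i)$ used in the twist are legitimate.
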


\begin{proof}
The group $G$ acts on 10 singular points of $\overline {M_2(2)}^{inv}$ with orbits represented by 
$(1,1,-1,1,-1,-1)$ and $(1,1,1,-1,-1,-1)$. The orbit of $(1,1,-1,1,-1,-1)$ consists of 9 singular points 
whereas $(1,1,1,-1,-1,-1)$ is a fixed point under the action. Comparing with \eqref{eq3.3} and
\eqref{eq3.4} and the definition of $A_1$ and $B_1$ gives the assertion.  
\end{proof}

\section{The map $\overline{S}_2^+ \ra \overline{S_2^+}^{inv}$}

We have constructed two compactifications of the moduli space $S_2^+$ of smooth even spin curves of genus 2,
namely the moduli space $\overline{S}_2^+$ of generalized even spin curves of 
genus 2 and the invariant theoretical compactification $\overline{S_2^+}^{inv}$. They fit into 
the following diagram
\begin{equation*} 
\xymatrix{
\overline{S}_2^+ \ar[d]_{\pi} & \overline{S_2^+}^{inv} \ar@<-1ex>[d]^<<<<{\overline g}\\
\overline{M}_2 \ar[r]^<<<<{f_2} & \overline{M}_2^{inv}
}
\end{equation*}
where $\pi$ denotes the forgetful map, $\overline g$ the map of diagram \eqref{diag2.5}, and
$f_2$ is the canonical holomorphic map constructed in \cite[Theorem 1.1]{al2}.

\begin{prop} \label{contraction}
There is a canonical holomorphic map $\overline {f_2}: \overline{S}_2^+ \ra 
\overline{S_2^+}^{inv}$ making the following diagram commutative:
\begin{equation}       \label{diag3.1}
\xymatrix{
\overline{S}_2^+ \ar[r]^>>>>>{\overline{f_2}}   \ar[d]_{\pi} & \overline{S_2^+}^{inv} \ar@<-1ex>[d]^<<<<{\overline g}\\
\overline{M}_2 \ar[r]^<<<<{f_2} & \overline{M}_2^{inv}
}
\end{equation}

Moreover, $\overline{f_2}$ contracts the 
divisor $A_1$ to the point $a_1^{inv}$, the divisor $B_1$ to the point $b_1^{inv}$, and is biholomorphic on 
${\overline S_2^+} \setminus (A_1 \cup B_1)$.
\end{prop}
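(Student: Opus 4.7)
The plan is to construct $\overline{f_2}$ by a normality argument, read off its action on $A_1 \cup B_1$ from the contraction properties of $f_2$, and deduce biholomorphism elsewhere via Zariski's main theorem.

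I would first define $\overline{f_2}$ on $S_2^+$ by composing the isomorphism $S_2^+ \simeq M_2(2)/G$ of \eqref{eq2.1} with the open inclusion $M_2(2)/G \hookrightarrow \overline{S_2^+}^{inv}$; this satisfies $\bar g \circ \overline{f_2}|_{S_2^+} = f_2 \circ \pi|_{S_2^+}$ by diagram \eqref{diag2.5}. To extend globally I would form the fibre product $Y := \overline{S}_2^+ \times_{\overline{M}_2^{inv}} \overline{S_2^+}^{inv}$, which is finite over $\overline{S}_2^+$ since $\bar g$ is finite. The section of $Y \to \overline{S}_2^+$ over $S_2^+$ has an irreducible closure $Z \subset Y$, and $Z \to \overline{S}_2^+$ is finite and birational, hence an isomorphism by Zariski's main theorem and normality of $\overline{S}_2^+$. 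Composing its inverse with the projection $Y \to \overline{S_2^+}^{inv}$ yields the desired morphism $\overline{f_2}$ rendering diagram \eqref{diag3.1} commutative.

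By \cite[Theorem 1.1]{al2} the map $f_2$ contracts $\Delta_1$ to a single point $p \in \overline{M}_2^{inv}$. Since $\pi^{-1}(\Delta_1) = A_1 \cup B_1$ and $\bar g \circ \overline{f_2} = f_2 \circ \pi$, commutativity forces $\overline{f_2}(A_1 \cup B_1) \subset \bar g^{-1}(p) = \{a_1^{inv}, b_1^{inv}\}$ by Proposition \ref{prop7.5}. Irreducibility of $A_1$ and $B_1$ then implies that each is contracted to a single point. Matching modular data---a general element of $A_1$ carries nontrivial (even) theta characteristics on both elliptic components, matching the description of $a_1^{inv}$, while a general element of $B_1$ has $\kappa_{X_i} = \cO_{X_i}$ (odd), matching $b_1^{inv}$---gives $\overline{f_2}(A_1) = a_1^{inv}$ and $\overline{f_2}(B_1) = b_1^{inv}$.

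For biholomorphism on the complement, note that by \cite[Theorem 1.1]{al2} the map $f_2$ is an isomorphism off $\Delta_1$, so $f_2 \circ \pi$ is finite on $\overline{S}_2^+ \setminus (A_1 \cup B_1)$. The preceding paragraph gives $\overline{f_2}^{-1}(\{a_1^{inv}, b_1^{inv}\}) = A_1 \cup B_1$, so the restriction $\overline{f_2}': \overline{S}_2^+ \setminus (A_1 \cup B_1) \to \overline{S_2^+}^{inv} \setminus \{a_1^{inv}, b_1^{inv}\}$ is proper (preimage of open equals open) and quasi-finite, hence finite. Being birational between normal varieties, it is an isomorphism by Zariski's main theorem, and on $A_0$ and $B_0$ the induced bijections onto $A_0^{inv}$ and $B_0^{inv}$ match Proposition \ref{prop7.4}.

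The main obstacle is the parity matching in the second step: deciding which of $A_1, B_1$ contracts to which of $a_1^{inv}, b_1^{inv}$ requires correlating the $G$-fixed sign configuration $(1,1,1,-1,-1,-1)$ with the partition of Weierstrass points induced by the trivial theta characteristic on each elliptic tail, and the $9$-element $G$-orbit of $(1,1,-1,1,-1,-1)$ with a partition corresponding to nontrivial even theta characteristics. This means unwinding the correspondence between even theta characteristics of a genus 2 curve and partitions of its six Weierstrass points carefully enough to identify the stable-reduction limits on both sides.
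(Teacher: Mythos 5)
Your proof is correct and reaches the same conclusion by the same overall strategy as the paper (extend the canonical identification on $S_2^+$, use commutativity with $f_2$ and the fact that $f_2$ contracts $\Delta_1$ to the semistable point to force $A_1\cup B_1$ into $\{a_1^{inv},b_1^{inv}\}$, then match parities via Proposition \ref{prop7.5}). Where you genuinely differ is in the mechanism for producing the morphism and for the ``biholomorphic off $A_1\cup B_1$'' claim: the paper extends the birational map stratum by stratum, arguing that $A_0\setminus(A_1\cup B_1)$ and $A_0^{inv}\setminus\{a_1^{inv},b_1^{inv}\}$ (and likewise for $B_0$) ``parametrize the same objects,'' and then handles $A_1,B_1$ by the contraction property of $f_2$; you instead take the closure of the graph in the fibre product $\overline{S}_2^+\times_{\overline{M}_2^{inv}}\overline{S_2^+}^{inv}$, note it is finite and birational over the normal $\overline{S}_2^+$, and invoke Zariski's main theorem, and you repeat the ZMT argument (properness over the complement of the two points plus quasi-finiteness of $f_2\circ\pi$ there) to get the isomorphism away from $A_1\cup B_1$. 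Your route is arguably tighter on the regularity of the extension, which the paper leaves somewhat implicit, at the cost of not exhibiting the modular identification of the $A_0,B_0$ strata directly (you recover it only a posteriori via Proposition \ref{prop7.4}); the paper's route makes the modular meaning transparent but is less explicit about why the birational map is everywhere defined. The one point you flag as delicate, the assignment $A_1\mapsto a_1^{inv}$, $B_1\mapsto b_1^{inv}$, is resolved exactly as you suggest: Proposition \ref{prop7.5} already encodes the correspondence between the sign configurations $(1,1,-1,1,-1,-1)$ (orbit of length $9$, nontrivial $2$-torsion on both elliptic tails, hence $A_1$) and $(1,1,1,-1,-1,-1)$ ($G$-fixed, trivial theta characteristics, hence $B_1$), consistent with $\deg(\pi|_{A_1})=9$ and $\pi|_{B_1}$ being an isomorphism.
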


\begin{proof}
Both spaces $\overline{S}_2^+$ and $\overline{S_2^+}^{inv}$ are compactifications of 
$S_2^+$, which gives a canonical birational map $\overline {f_2}$ making \eqref{diag3.1}
commutative. The boundary of $\overline{S}_2^+$ consists of the divisors $A_0,B_0,
A_1, B_1$ and the boundary of $\overline{S_2^+}^{inv}$ consists of the divisors $A_0^{inv}, 
B_0^{inv}$ and the points $a_1^{inv},b_1^{inv}$. The spaces $A_0 \setminus (A_1 \cup B_1)$
and $A_0^{inv} \setminus (a_1^{inv} \cup b_1^{inv})$  (respectively 
$B_0 \setminus (A_1 \cup B_1)$ and $B_0^{inv} \setminus (a_1^{inv} \cup b_1^{inv})$)
parametrize the same objects. Hence the map $\overline {f_2}$ extends to them. 

On the other hand, the divisors $A_1$ and $B_1$ lie over the boundary divisor $\Delta_1$ of 
$\overline {M}_2$ whereas the points $a_1^{inv}$ and $b_1^{inv}$ lie over the  boundary point 
$p_{ss}$ (see \cite[Corollary 5.3]{al1}) of $\overline {M}_2^{inv}$. Since $f_2$ contracts $\Delta_1$ 
to $p_{ss}$, the last assertion follows from Proposition \ref{prop7.5}.
\end{proof}

\section{Proof of part (2) and (3) of Theorem 2}

For the sake of abbreviation let $Y:=\overline{S_2^+}^{inv} $ and $X:= \overline{M}_2^{inv}$ 
be the coarse moduli spaces and we denote by $\tilde{q}: \cY \ra Y$ and
$q: \cX \ra X$ the corresponding moduli stacks. We have the following commutative diagram:
$$
\xymatrix{
        \cY \ar[r]^{\widetilde{q}}  \ar[d]_<<<<{\widetilde{g}}  & Y \ar[d]^<<<<{\overline {g}} \\
         \cX  \ar[r]^{q}  &  X 
    }
 $$
where $\overline{g}$ denotes the forgetful map of diagram \ref{diag3.1} and $\widetilde{g}$
the corresponding map on the stack level.
Recall from Proposition \ref{prop7.4} that the boundary of $Y$ is 
$D_2^+ =A_0^{inv} \cup B_0^{inv} $.  Similarly the boundary of $\cY$ is 
$\cD_2^+ = \alpha_0^{inv} + \beta_0^{inv}$.

Since $\Pic_{\QQ}(\overline{S}_2^+)$ is generated by $A_0, B_0, A_1, B_1$ with one relation
\eqref{eq4.5}, we deduce by means of Proposition \ref{contraction} that $\Pic(Y)$ is generated by the classes
$A_0^{inv}$ and  $B_0^{inv}$ with the relation
\begin{equation} \label{relation-inv}
3A_0^{inv} = 2 B_0^{inv}.
\end{equation}
It follows from Lemma \ref{ram} and Proposition \ref{contraction} that the map  
$\overline{g}: Y \ra X$ is simply ramified in $B_0^{inv}$ and unramified elsewhere in 
codimension 1. It is well known that $\Pic_{\QQ}(X)$ is generated by the class of the boundary 
$\Delta_0$. Hence 
$$
\overline g^* (\Delta_0) = A_0^{inv} + 2B_0^{inv}.
$$
Further we deduce from Lemma \ref{biellocus} and the fact that the bielliptic locus of $X$ is given by the class $3\Delta_0$ (see \cite{h}),  that the bielliptic locus of $Y$ is given by the class
$$
\widetilde E = \frac{2}{5} \cdot 3 \overline g^* (\Delta_0) = \frac{24}{5} A_0^{inv}.
$$
As for Corollary \ref{ramform} we obtain the following formula for $\widetilde q: \cY \ra Y$.
For every $\epsilon \in \QQ$
$$
K_{\cY} + \epsilon \cD_2^+ = \widetilde q^* \left(  \pi^*K_X + B_0 + \frac{1}{2}\widetilde{E} + \epsilon (D_2^+) \right)
$$
holds. Using the relation \eqref{relation-inv} and noting that according to \cite{h} we have
$$
K_X = - \frac{11}{5} \Delta_0,
$$
we get the following ramification formula for $\widetilde q: \cY \ra Y$.

\begin{prop} \label{positive}

For every $\epsilon \in \QQ$ we have
$$
 K_{\cY}  + \epsilon(\cD_2^+) = \tilde{q}^* \left( (- \frac{49}{10} + 
\frac{5}{2} \epsilon)A_0^{inv}  \right).
$$
\end{prop}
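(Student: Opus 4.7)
The plan is to take the displayed ramification formula
\[
K_{\cY} + \epsilon\, \cD_2^+ \;=\; \widetilde{q}^*\!\left( \overline{g}^* K_X + B_0^{inv} + \tfrac{1}{2}\widetilde{E} + \epsilon\, D_2^+ \right)
\]
(the analog of Corollary \ref{ramform} for $\widetilde{q}\colon\cY\to Y$) appearing immediately before the statement, and to reduce the expression inside $\widetilde{q}^*(\cdot)$ to a rational multiple of the single generator $A_0^{inv}$ of $\Pic_{\QQ}(Y)$. The crucial preparatory remark — already in the text — is that because $\overline{f_2}$ contracts both $A_1$ and $B_1$, pushing forward the relation \eqref{eq4.5} yields $3 A_0^{inv} = 2 B_0^{inv}$, so that in $\Pic_{\QQ}(Y)$ one may substitute $B_0^{inv} = \tfrac{3}{2}A_0^{inv}$ throughout.

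Next I would plug in the classes already computed in this section. Using $K_X = -\tfrac{11}{5}\Delta_0$ together with $\overline{g}^*\Delta_0 = A_0^{inv} + 2 B_0^{inv}$ gives $\overline{g}^* K_X = -\tfrac{11}{5}(A_0^{inv}+2B_0^{inv})$. The Riemann--Hurwitz ramification contribution of the simply ramified covering $\overline{g}\colon Y\to X$ along $B_0^{inv}$ is the divisor $B_0^{inv}$ itself; the bielliptic correction is $\tfrac{1}{2}\widetilde{E} = \tfrac{12}{5}A_0^{inv}$; the boundary is $\epsilon D_2^+ = \epsilon(A_0^{inv}+B_0^{inv})$.

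Finally, replacing every $B_0^{inv}$ by $\tfrac{3}{2}A_0^{inv}$, the $\epsilon$-independent coefficient collapses to $-\tfrac{11}{5}\cdot 4 + \tfrac{3}{2} + \tfrac{12}{5} = -\tfrac{44}{5} + \tfrac{3}{2} + \tfrac{12}{5} = -\tfrac{49}{10}$, while the $\epsilon$-coefficient becomes $\epsilon\bigl(1 + \tfrac{3}{2}\bigr) = \tfrac{5}{2}\epsilon$. Assembling these gives the claimed identity.

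I do not expect a genuine obstacle here: once all inputs are in hand the proof is a one-line arithmetic reduction. The only point that merits a brief verification is the ramification formula itself, namely that $\widetilde{q}\colon\cY\to Y$ is ramified in codimension one only along the pullbacks of $B_0^{inv}$ and $\widetilde{E}$, with no extra contribution from $A_1$ or $B_1$. This is precisely because $\overline{f_2}$ contracts those two strata to the isolated points $a_1^{inv}, b_1^{inv}$, so they no longer appear as codimension-one loci with nontrivial extra automorphisms on $Y$; the rest of Lemma \ref{canon} then goes through verbatim after dropping the $\tfrac{1}{2}(A_1+B_1)$ summand.
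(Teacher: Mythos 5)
Your proposal is correct and follows essentially the same route as the paper: starting from the ramification formula for $\widetilde{q}\colon\cY\to Y$ stated just before the proposition, substituting $K_X=-\tfrac{11}{5}\Delta_0$, $\overline{g}^*\Delta_0=A_0^{inv}+2B_0^{inv}$, $\tfrac{1}{2}\widetilde{E}=\tfrac{12}{5}A_0^{inv}$ and the relation $3A_0^{inv}=2B_0^{inv}$, and carrying out the arithmetic. Your added remark justifying the absence of $A_1,B_1$ contributions (since $\overline{f_2}$ contracts them to points) is consistent with, and slightly more explicit than, what the paper leaves implicit.
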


\begin{proof}[Proof of parts (2) and (3) of Theorem 2]

Proposition \ref{positive} implies that the class  $(- \frac{49}{10} + 
\frac{5}{2}\epsilon)A_0^{inv}$ is nef (respectively ample) if and only if $\epsilon \geq \frac{49}{25}$ (respectively $\epsilon > \frac{49}{25}$). 

It remains to check the singularity conditions of $Y$. Note first that the boundary of $Y$ is 
$A_0^{inv} \cup B_0^{inv}$. Moreover, the divisors $A_0$ and $B_0$ intersect
non-trivially in $\overline{S}_2^+$, Proposition \ref{contraction} implies that the contraction points
$a_1^{inv}$ and $b_1^{inv}$ are contained in $A_0^{inv} \cup B_0^{inv}$. Hence the proof is the same as in the proof of part (1) of Theorem 2 in Section 6.
\end{proof}

\section{Proof of part (2) of Theorem 3}

Recall that $\alpha$ is a canonical isomorphism $\alpha: {}_{\mathfrak{S}_3}{\widehat M}_2 \ra
{S_2^+}$ and ${}_{\mathfrak{S}_3}{\widetilde M}_2$ the partial 
compactification ${}_{\mathfrak{S}_3}{\widetilde M}_2 = {}_{\mathfrak{S}_3}{\widehat 
M}_2 \sqcup S$ of \cite{lo2},  with $S$ the closed subscheme of 
${}_{\mathfrak{S}_3}{\widetilde M}_2$ consisting of reducible $\mathfrak {S}_3$-coverings.
In Section 4 we proved that the construction of the map $\alpha$ does not extend to a 
holomorphic map ${}_{\mathfrak{S}_3}{\widetilde M}_2 \ra \overline{S}_2^+$. However,  we show the following

\begin{prop}\label{prop10.1}
The map $\alpha: {}_{\mathfrak{S}_3}{\widehat M}_2 \ra
{S_2^+}$ extends to a holomorphic map 
$\overline {\alpha}: {}_{\mathfrak{S}_3}{\widetilde M}_2  \ra \overline{S_2^+}^{inv}$
which contracts the divisor $S$ to the point $a_1^{inv}$ in ${S_2^+}^{inv}$.
\end{prop}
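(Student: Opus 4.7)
The plan is to build the extension $\overline{\alpha}$ by composing the codimension-one extension of Lemma \ref{extension} with the contraction $\overline{f_2}$ of Proposition \ref{contraction}, and then to push the map across the remaining codimension-two indeterminacy in $S$ using a graph-closure argument together with Zariski's main theorem.

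First I would apply Lemma \ref{extension} to produce a holomorphic map $\tilde{\alpha}: {}_{\mathfrak{S}_3}{\widehat M}_2 \sqcup U \to \overline{S}_2^+$ on a non-empty open $U \subset S$ whose complement in ${}_{\mathfrak{S}_3}{\widetilde M}_2$ has codimension at least two. The crux of the argument is to identify $\tilde{\alpha}(U) \subset A_1$. For a generic $s \in U$, Proposition~5.3 of \cite{lo2} forces the Prym to degenerate to $\widetilde P = P_1 \times P_2$ with principal polarization $P_1 \cup_0 P_2$, so the stable model of $\tilde{\alpha}(s)$ is this reducible genus-$2$ curve, placing $\tilde{\alpha}(s) \in A_1 \cup B_1$. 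To rule out $B_1$, a parity argument is required: the even theta characteristic produced by the construction of Theorem \ref{thm3.1}, which on a generic smooth fiber satisfies $h^0=0$, specializes to a line bundle whose restriction to each elliptic factor $P_i$ is a non-trivial $2$-torsion bundle, placing the limit in the $9$-parameter stratum $A_1$ rather than the isolated $1$-parameter stratum $B_1$.

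Composing with the contraction $\overline{f_2}$ of Proposition \ref{contraction}, which collapses $A_1$ to $a_1^{inv}$, yields a holomorphic map
$$
\overline{\alpha}_0 := \overline{f_2} \circ \tilde{\alpha}: {}_{\mathfrak{S}_3}{\widehat M}_2 \sqcup U \to \overline{S_2^+}^{inv}
$$
with $\overline{\alpha}_0(U) = \{a_1^{inv}\}$. To extend across $S \setminus U$, I would take the closure $\overline{G}$ of the graph of $\overline{\alpha}_0$ inside ${}_{\mathfrak{S}_3}{\widetilde M}_2 \times \overline{S_2^+}^{inv}$; the first projection $p_1: \overline{G} \to {}_{\mathfrak{S}_3}{\widetilde M}_2$ is then proper and birational, and ${}_{\mathfrak{S}_3}{\widetilde M}_2$ is normal. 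For any $s \in S \setminus U$, the parity analysis above applied to arbitrary sequences $x_n \to s$ with $x_n \in {}_{\mathfrak{S}_3}{\widehat M}_2$ shows that every limit of $\alpha(x_n)$ lies in $A_1 \cup B_1$, so its image under $\overline{f_2}$ lies in the discrete two-point set $\{a_1^{inv}, b_1^{inv}\}$. The fiber $p_1^{-1}(s)$ is therefore contained in this set; Zariski's main theorem and normality of ${}_{\mathfrak{S}_3}{\widetilde M}_2$ force the fiber to be connected, and it must contain $a_1^{inv}$ since the closure of $U \times \{a_1^{inv}\}$ lies in $\overline{G}$. Hence $p_1^{-1}(s) = \{a_1^{inv}\}$, $p_1$ is bijective and thus an isomorphism, and $\overline{\alpha} := p_2 \circ p_1^{-1}$ is the desired regular extension sending all of $S$ to $a_1^{inv}$.

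The main obstacle is the parity argument identifying the degeneration as landing in $A_1$ rather than in $B_1$: one must track the explicit divisor classes $\omega_i + \omega_j - \omega_k$ defining $\kappa$ in Theorem \ref{thm3.1} through the degeneration, verifying that as the six Weierstrass points of $C(s)$ split three-and-three between the two elliptic components of $P_1 \cup_0 P_2$, the restriction of $\kappa$ to each $P_i$ is a non-trivial $2$-torsion line bundle. This is the only point where the specific geometry of the $\mathfrak{S}_3$-construction enters in an essential way; once it is established, the rest of the extension is a formal consequence of properness, normality, and the contraction $\overline{f_2}$.
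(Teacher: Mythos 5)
Your proposal follows essentially the same route as the paper: extend $\alpha$ in codimension one via Lemma \ref{extension}, show $\tilde{\alpha}(U)\subset A_1$ by checking that the limiting theta characteristic restricts to a non-trivial $2$-torsion bundle on each elliptic component (exactly the computation the paper carries out with the Weierstrass points and the $6{:}1$ pencil), and then compose with the contraction $\overline{f_2}$ of $A_1$ to $a_1^{inv}$. Your graph-closure/Zariski's-main-theorem argument for crossing the codimension-two locus $S\setminus U$ is a careful filling-in of a step the paper simply asserts, but it is not a different method.
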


\begin{proof}
According to Lemma \ref{extension} the map $\alpha$ extends to a  birational map 
 $\tilde{\alpha}: {}_{\mathfrak{S}_3}{\widetilde M}_2 \ra S_2^+$ induced by the construction in Section  3
 defined on $ {}_{\mathfrak{S}_3}{\widehat M}_2  \cup U$ with $U \subset S$ a non-empty 
open set.  
 Since the image of the the map $\alpha$ degenerates 
 to a product of elliptic curves when a covering in ${}_{\mathfrak{S}_3}{\widehat M}_2$ degenerates
 to a spin curve in $S$, i.e. with underlying reducible curve, the image 
 $\tilde{\alpha}(U)$ is contained in  $A_1 \cup B_1$.  
 
Consider $(X_t, \kappa_t) \in S_2^+ $ a family of even spin smooth curves in the image of $\alpha$ 
degenerating to an admissible covering  $X_0 = C_1 \cup C_2$ with $C_1, C_2$ elliptic curves and 
$C_1 \cap C_2 = \{ p \}$ in the following way.
If $\kappa_t = \cO_{X_t}( \omega_{1,t}-\omega_{2,t} +\omega_{3,t})$
for some Weierstrass points $\omega_{i,t}$, $i=1,2,3$, then each element in the family comes with a 6:1 map $\psi_t: X_t \ra \PP^1$
given by the pencil $\langle 2\omega_{1,t}+2\omega_{2,t} +2\omega_{3,t}, 2\omega_{4,t}+2\omega_{5,t} +
2\omega_{6,t}  \rangle \subset |3K_{X_t}|$ (see Section 3).  The Weierstrass points $\omega_{i,t}$ specialize to points 
$\omega_{i,0} $ lying on the component $C_1$ for $i=1,2,3$ and on $C_2$ for $i=4,5,6$.  
 One checks that such admissible covering $X_0 \ra \PP^1 \cup \PP^1$ must be totally ramified at the node $p$,
 which is the neutral element for both elliptic curves 
 and the 6:1 map $C_1 \ra \PP^1$ (respectively $C_2 \ra \PP^1$ ) is determined by the pencil 
 $\langle 2\omega_{1,0}+2\omega_{2,0} +2\omega_{3,0}, 6p  \rangle$ (respectively $\langle 2\omega_{4,0}+2\omega_{5,0} +
2\omega_{6,0}, 6p \rangle$ ). Then the theta characteristic $\kappa_t$ specializes to $\kappa_0 = (\kappa_{C_1},  \kappa_{C_2})$ where $\kappa_{C_1} = \cO_{C_1}( \omega_{1,0}-\omega_{2,0} +\omega_{3,0} - p)$
and  $\kappa_{C_2} = \cO_{C_2}( \omega_{4,0}-\omega_{5,0} +\omega_{6,0} - p)$. Clearly these 2-torsion points are non-trivial, so
$(X_0, \kappa_0) \in A_1$. We conclude that $\tilde{\alpha}(U) \subset A_1$.

 Since $A_1$ is contracted to the point $a_1^{inv}$  in $\overline{S_2}^{inv}$ under the map 
$\overline f_2$, there exists a map $\bar{\alpha}$ well defined 
 on ${}_{\mathfrak{S}_3}{\widetilde M}_2$ making commutative the following diagram:
 $$
 \xymatrix@R=1.4cm@C=1cm{
{}_{\mathfrak{S}_3}{\widetilde M}_2 \ar@{-->}[r]^{\tilde{\alpha}} \ar[dr]_{\bar{\alpha}} & 
\overline{S}_2^+ \ar[d]^{\overline{f}_2} \\
 & \overline{S_2}^{inv}
}
$$
and such that $\bar{\alpha} (S)= \{ a_1^{inv}\}$.
\end{proof}

\end{document}